\documentclass[review, hidelinks, onefignum, onetabnum]{siamart250211}
\nolinenumbers

\usepackage{amsfonts}
\usepackage{amssymb}
\usepackage{tikz}
\usepackage{booktabs}
\usepackage{subfigure}
\allowdisplaybreaks
\usepackage{graphicx}
\usepackage{epstopdf}
\usepackage{algorithmic}

\ifpdf
  \DeclareGraphicsExtensions{.eps,.pdf,.png,.jpg}
\else
  \DeclareGraphicsExtensions{.eps}
\fi


\newsiamremark{remark}{Remark}
\newsiamremark{hypothesis}{Hypothesis}
\crefname{hypothesis}{Hypothesis}{Hypotheses}
\newsiamthm{claim}{Claim}
\newsiamremark{fact}{Fact}
\crefname{fact}{Fact}{Facts}

\headers{Space-time method for control problems with interfaces}{Q. H. Nguyen, P. C. Hoang, V. C. Le, and T. T. M. Ta}

\title{A space-time interface-fitted method for\\
moving-subdomain distributed control problems\\
with energy regularization\thanks{Submitted to the editors DATE.
\funding{This work was funded by the Vietnam Ministry of Education and Training under grant number B2024.BKA.18 and the European Research Council (ERC) under the European Union's Horizon 2020 research and innovation programme (grant agreement 101001847).}}}

\author{Quang Huy Nguyen\thanks{Faculty of Mathematics and Informatics, Hanoi University of Science and Technology, 11657 Hanoi, Vietnam 
  (\email{huy.nguyenquang1@hust.edu.vn, cuc.hp222163m@sis.hust.edu.vn, mai.tathithanh@hust.edu.vn}).}
\and Phuong Cuc Hoang\footnotemark[2]
\and Van Chien Le\thanks{IDLab, Department of Information Technology, Ghent University - imec, 9000 Ghent, Belgium (\email{vanchien.le@ugent.be}).}
\and Thi Thanh Mai Ta\footnotemark[2]}

\ifpdf
\hypersetup{
  pdftitle={Space-time method for control problems with interfaces},
  pdfauthor={Q. H. Nguyen, P. C. Hoang, V. C. Le, and T. T. M. Ta}
}
\fi


\newcommand{\R}{\mathbb R}

\newcommand{\transpose}{\mathsf{T}}

\DeclareMathOperator{\Ls}{L}
\DeclareMathOperator{\LLs}{\mathbf L}
\DeclareMathOperator{\Hs}{H}

\DeclareMathOperator{\Ws}{W}

\DeclareMathOperator{\Cs}{C}
\DeclareMathOperator{\CCs}{\mathbf C}
\DeclareMathOperator{\Ds}{D}
\DeclareMathOperator{\Es}{E}

\DeclareMathOperator{\Us}{U}

\newcommand{\brac}[1]{\left\lbrace{#1}\right\rbrace}

\newcommand{\paren}[1]{\left({#1}\right)}
\newcommand{\norm}[1]{\left\lVert{#1}\right\rVert}

\newcommand{\abs}[1]{\left\vert{#1}\right\vert}

\newcommand{\inprod}[1]{\left\langle{#1}\right\rangle}

\newcommand{\ovl}[1]{\overline{#1}}

\newcommand{\vertiii}[1]{{\left\vert\kern-0.25ex\left\vert\kern-0.25ex\left\vert #1 
    \right\vert\kern-0.25ex\right\vert\kern-0.25ex\right\vert}}

\newcommand{\pa}{\partial}

\newcommand{\Gm}{\Gamma}

\newcommand{\Om}{\Omega}

\newcommand{\vphi}{\varphi}
\newcommand{\fa}{\forall}
\newcommand{\sst}{\subset}

\newcommand{\xb}{\boldsymbol{x}}

\newcommand{\vb}{\mathbf{v}}

\newcommand{\dx}{\,\mathrm{d}\xb}
\newcommand{\ds}{\,\mathrm{d}s}
\newcommand{\dt}{\,\mathrm{d}t}
\newcommand{\di}{\,\mathrm{d}}

\newcommand{\q}{\quad}
\newcommand{\qq}{\qquad}
\newcommand{\qqq}{\qquad\quad}
\newcommand{\qqqq}{\qquad\qquad}
\newcommand{\qqqqq}{\qquad\qquad\quad}

\begin{document}

\maketitle

\begin{abstract}
    This paper investigates a space-time interface-fitted approximation of a moving-interface optimal control problem with energy regularization. We reformulate the optimality conditions into a variational problem involving both the state and adjoint. This problem is shown to be equivalent to our optimal control problem. Based on fully unstructured, space-time interface-fitted meshes, we propose and analyze a Petrov-Galerkin approximation of the problem. An optimal error estimate with respect to a discrete norm is established under a specific regularity assumption on the state and adjoint. Several numerical results are presented to corroborate our theoretical results.
\end{abstract}

\begin{keywords}
distributed control problems, moving-subdomain problems, space-time interface-fitted method, error estimates.
\end{keywords}

\begin{MSCcodes}
35K20, 49N10, 65M15, 65M60.
\end{MSCcodes}

\section{Introduction}
\label{sec: introduction}

Let $\Omega$ be an open bounded domain in $\mathbb{R}^m\ \left(m=1, 2 \text{ or } 3\right)$ with Lipschitz boundary $\partial\Omega$. For each $t \in [0, T]$, with $T>0$, the domain $\Omega$ is partitioned into two subdomains, $\Omega_1(t)$ and $\Omega_2(t)$, separated by an interface $\Gamma(t)$. The interface $\Gamma(t)$ is transported by a velocity field $\vb \in \Cs\left(\left[0, T\right], \CCs^2\left(\overline{\Omega}\right)\right)$ satisfying $\nabla \cdot \vb\left(\xb, t\right) = 0$ for all $\left(\xb, t\right) \in \Omega \times \left[0, T\right]$.

We denote by $Q_T:= \Omega\times (0,T)$ the space-time cylinder, and by $Q_i\ \left(i=1,2\right)$ the two subdomains separated by the interface $\Gamma^\ast := \bigcup_{t\in (0,T)} \Gamma(t)\times \left\{t\right\}$. We assume that $\Gamma^\ast$ is a $\Cs^2$-regular hypersurface in $\mathbb{R}^{m+1}$ \cite[Remark~2.2]{VR2018}. Consider the following advection-diffusion problem
\begin{equation}
    \label{eq: state equation}
    \left\{\begin{array}{ll}
         \partial_t u + \vb \cdot \nabla u - \nabla \cdot\left(\kappa \nabla u\right) = f \qq & \text{in} \q Q_T, \\
         \left[u\right] = 0 & \text{on} \q \Gamma^\ast,\\
         \left[\kappa \nabla u \cdot \mathbf{n}\right] = 0 & \text{on} \q \Gamma^\ast,\\
         u = 0 & \text{on} \q \partial\Omega \times (0,T), \\
         u\left(\cdot, 0\right) = 0 & \text{in} \q \Omega,
    \end{array}\right.
\end{equation}
where $f$ denotes the source term and $\mathbf{n}$ is the unit normal on $\Gamma(t)$, pointing from $\Omega_1(t)$ into $\Omega_2(t)$. The notation $\left[u\right] := u_{1 \, \mid\, \Gamma(t)} - u_{2 \, \mid\, \Gamma(t)}$ represents the jump of $u$ across $\Gamma(t)$, with $u_{i \, \mid\,  \Gamma(t)}$ the limiting value of $u$ from $\Om_i(t)\ (i=1,2)$. For simplicity, we consider a subdomain-wise positive constant diffusion coefficient 
$$
\kappa := 
\begin{cases}
    \kappa_1 > 0 \qq & \text{in}\q Q_1, \\
    \kappa_2 > 0 & \text{in}\q Q_2.
\end{cases}
$$

In this work, we employ standard notations for Lebesgue spaces, isotropic and anisotropic Sobolev spaces, as well as their equipped norms \cite[Sections~1.4 and 2.2]{Ern2021}; see also \cite[Section~1.4.1]{WYW2006}. Particularly, the notation $\Hs^{1,0}\left(Q_T\right)$ denotes the anisotropic space
$$
\Hs^{1,0}\left(Q_T\right) := \left\{w\in \Ls^2\left(Q_T\right)\mid \partial_{\boldsymbol{x}}^{\boldsymbol{\alpha}} w \in \Ls^2\left(Q_T\right)\text{ for all } \abs{{\boldsymbol{\alpha}}}\le 1\right\},
$$
which is endowed with the norm
$$
\norm{w}^2_{\Hs^{1, 0}\left(Q_T\right)} := \sum_{\abs{{\boldsymbol{\alpha}}}\le 1} \norm{\partial_{\boldsymbol{x}}^{\boldsymbol{\alpha}} w}^2_{\Ls^2(Q_T)} \qqqqq \forall w \in \Hs^{1,0}\left(Q_T\right).
$$
Let $\Ws$ be the closure of $\Cs^1_0\left(Q_T\right)$ with respect to this norm. We equip the space $\Ws$ with the equivalent norm
$$
\norm{w}_{\Ws}^2 := \int\limits_{Q_T} \kappa \abs{\nabla w}^2 \dx \dt \qqqq \forall w\in \Ws.
$$
The notation $\Ws^\prime$ denotes the dual space of $\Ws$. Given a desired state $u_d\in \Ls^2\left(Q_T\right)$ and a regularization parameter $\eta > 0$, we are interested in the following optimal control problem
\begin{equation}
    \label{eq: original problem}
    \begin{aligned}
        & \min_{f\in \Ws^\prime} J\left(f\right):= \dfrac{1}{2}\norm{u\left(f\right) - u_d}^2_{\Ls^2\left(Q_T\right)} + \dfrac{\eta}{2}\norm{f}^2_{\Ws^\prime}, \\
        & \text{ subject to \eqref{eq: state equation}}.
    \end{aligned}
\end{equation}
Here, we use the notation $u\left(f\right)$ to emphasize the dependence of $u$ in \eqref{eq: state equation} on $f$. This model can arise in the context of heat transfer \cite{Slodicka2021}, the induction heating process \cite{LSV2024}, or the eddy-current problem in electromagnetics \cite{LSV2021a,LSV2021b}, where the goal is to find $f$ such that the corresponding $u\left(f\right)$ best approximates $u_d$. The choice of 
$f\in \Ws^\prime$ is motivated by the well-posedness of the variational formulation of Problem \eqref{eq: state equation}, as established in \cite[Theorem~2.1]{NLPT2024}. Compared to the analogous optimal control problem employing $\Ls^2\left(Q_T\right)$-regularization, Problem \eqref{eq: original problem} is more challenging due to the enlargement of the control space.

\subsection{Relevant literature}

Within the scope of finite element methods (FEMs), numerous techniques for discretizing space and time in time-dependent optimal control problems have been thoroughly investigated. These techniques are typically classified into two principal categories: separate discretization and "all-at-once" discretization. The former involves the combination of a temporal discretization scheme, such as the Euler method \cite{SS2020, GQ2023}, the Crank-Nicolson method \cite{MV2011, VHV2015, ZZH2023}, or the discontinuous Galerkin method \cite{BMV2007, MV2008, NV2012}, with a particular FEM for spatial discretization. Notably, combining adaptive techniques or parallel solvers with these methods can be challenging \cite{LMN2016, LMS2019}. In contrast, the second approach, known as the space-time method, treats the time variable as an additional spatial variable, with the temporal derivative of the solution considered as a convection term along the time direction \cite{LSTY2021, LS2022}. This method has been applied in \cite{LSTY2021b, LSY2024} to discretize parabolic optimal control problems with energy regularization. In \cite{LSTY2021b}, the authors established optimal error estimates for the state and adjoint in a discrete trial space norm. Using the perturbed Schur complement equation, they subsequently derived a higher-order estimate in the $\Ls^2\left(Q_T\right)$-norm for the state in \cite{LSY2024}. However, in all the aforementioned papers, the spatial domain does not contain an interface.

When the spatial domain is split by an interface, the solutions to PDE problems often exhibit low global spatial regularity, which prevents classical FEMs from converging at optimal rates \cite{Babuka1970}. Over the past few decades, numerous approaches have been developed to overcome this limitation. Prominent examples include the extended FEM \cite{Zunino2013}, the immersed FEM \cite{HLLY2013, Guo2021}, and the interface-fitted FEM \cite{CZ1998}. Despite these advances, time-dependent optimal control problems involving interfaces have received relatively little attention. To the best of our knowledge, only the papers \cite{ZLW2020, MHH2025} have addressed such problems, in which they employed the classical $\Ls^2\left(Q_T\right)$-regularization. In \cite{ZLW2020}, Zhang et al. employed the immersed FEM to discretize an optimal control problem for a parabolic transmission equation. They derived optimal error estimates for the control, state, and adjoint in various cases. In contrast, using the interface-fitted FEM, Ta et al. \cite{MHH2025} discretized an inverse problem for the advection-diffusion equation. Under suitable assumptions, the authors obtained different error estimates for three source discretizations. 

\subsection{Contributions and outline}

This paper proposes a numerical method for the moving-subdomain distributed control problem \eqref{eq: original problem}. Our approach is to eliminate the control, thereby reformulating the optimality conditions into a variational problem involving both the state and adjoint. The problem is then discretized in a space-time setting, where finite element spaces are constructed on fully unstructured, interface-fitted meshes \cite{NLPT2024}. In the context of moving-interface problems, this method eliminates the need for re-meshing at each time step while still providing a reasonable interface resolution. Moreover, it is flexible with respect to adaptability and parallelization. Unlike \cite{NPS2011, GHZ2012, KYK2014, LSTY2021b, FK2023}, where the authors discretized the optimality conditions via equivalent variational problems and established error estimates under global regularity assumptions of the state and adjoint, our work focuses on addressing the non-smoothness across the interface of these variables. Specifically, to interpolate functions with locally high but globally low regularity, we employ Stein extension operators \cite{Stein1971, Ern2021}. We derive a nearly optimal estimate for the one-dimensional spatial domain and a sub-optimal estimate for the two-dimensional spatial domain. Under stronger assumptions, the convergence rate is optimal for both one-dimensional and two-dimensional spatial domains.

The manuscript is structured as follows. In \Cref{sec: preliminaries}, we first present the essential functional settings, followed by a study of the existence and uniqueness of solutions to Problem \eqref{eq: original problem}, along with the corresponding optimality conditions. These optimality conditions are then reformulated into Problem \eqref{eq: coupled state-adjoint}, which is a variational problem involving both the state and adjoint. We discuss the unique solvability of this problem in \Cref{sec: reduced optimality system}. Next, \Cref{sec: finite element discretization} presents the finite element discretization of Problem \eqref{eq: coupled state-adjoint}, and different error estimates for the state and adjoint are established under specific regularity assumptions. Finally, we illustrate our theoretical results with numerical examples in \Cref{sec: numerical results}.

\section{Preliminaries}
\label{sec: preliminaries}

For $w\in \Ws$, its distributional time derivative is defined as
$$
\partial_t w\left(\xi\right) := - \int\limits_{Q_T} w \partial_t \xi \dx \dt\qqqq \fa \xi \in \Cs_0^1\left(Q_T\right).
$$
Then, we introduce the space $\Us := \left\{w\in \Ws\mid \partial_t w \in \Ws^\prime \right\}$ and its two subspaces
$$
\Us_0 := \left\{w\in \Us\mid w\left(\cdot, 0\right) = 0\right\} \qqq \text{and} \qqq \Us_T := \left\{w\in \Us\mid w\left(\cdot, T\right) = 0\right\},
$$
furnished with the graph norm
$$
\norm{w}^2_{\Us} := \norm{w}^2_{\Ws} + \norm{\partial_t w}^2_{\Ws^\prime}\qqqq \forall w\in \Us.
$$
It is worth mentioning that the space $\Us$ is identical to the Bochner space 
$$
\brac{w \in \Ls^2\left(\left(0, T\right), \Hs^1_0\left(\Om\right)\right) \mid \pa_t w \in \Ls^2\left(\left(0, T\right), \Hs^{-1}\left(\Om\right)\right)},
$$
allowing us to define the traces at $t = 0$ and $t = T$ in $\Ls^2\left(\Om\right)$ \cite[Lemma~7.3]{Roubicek2005}. Therefore, the spaces $\Us_0$ and $\Us_T$ are well-defined. 

The variational formulation of Problem \eqref{eq: state equation} is stated as follows: Given $f\in \Ws^\prime$, determine $u\in \Us_0$ such that
\begin{equation}
    \label{eq: weak state equation}
    a\left(u, \vphi\right) = \inprod{f, \vphi} \qqqq \forall\vphi \in \Ws,
\end{equation}
where the bilinear form $a : \Us_0\times \Ws \rightarrow \R$ is given by
$$
a\left(u, \vphi\right) := \inprod{\partial_t u, \vphi} +\int\limits_{Q_T}  \left(\vb\cdot\nabla u\right)\vphi + \kappa \nabla u \cdot \nabla \vphi \dx \dt,
$$
and $\inprod{\cdot, \cdot}$ denotes the duality pairing between $\Ws^\prime$ and $\Ws$. This problem is well-posed \cite[Theorem~2.1]{NLPT2024}. Therefore, Problem \eqref{eq: original problem} is reduced to the following form
\begin{equation}
    \label{eq: problem formulation}
    \begin{aligned}
        & \min_{f\in \Ws^\prime} J\left(f\right), \\
        & \text{ subject to \eqref{eq: weak state equation}}.
    \end{aligned}
\end{equation}
Since $J$ is a continuous and strictly convex functional on the Hilbert space $\Ws^\prime$, this problem admits a unique solution $f^\ast\in \Ws^\prime$, see the proof of \cite[Theorem~2.17]{Troltzsch2010}. 

Next, we derive the optimality conditions. To do so, we introduce the following adjoint problem: Find $p\left(f\right)\in \Us_T$ such that
\begin{equation}
    \label{eq: weak adjoint equation}
    a^{\prime}\left(p\left(f\right),\phi\right) = \int\limits_{Q_T}\left(u\left(f\right)-u_d\right)\phi \dx\dt \qqqq \forall \phi\in \Ws,
\end{equation}
where the bilinear form $a^\prime: \Us_T \times \Ws \rightarrow \mathbb{R}$ is defined as
$$
a^{\prime}\left(p,\phi\right) := - \inprod{\partial_t p,\phi} +\int\limits_{Q_T} -\left(\vb\cdot\nabla p\right)\phi + \kappa \nabla p \cdot \nabla \phi \dx \dt.
$$
After changing the directions of time and the velocity field, we apply \cite[Lemmas~2.1 and 2.2]{NLPT2024} to conclude the well-posedness of this problem. 

We also need the Riesz representation: For any $\widetilde{w}\in \Ws^\prime$, there exists a unique $z\left(\widetilde{w}\right)\in \Ws$ such that
\begin{equation}
    \label{eq: riesz representation 1}
    \int\limits_{Q_T} \kappa \nabla z\left(\widetilde{w}\right) \cdot \nabla \zeta \dx \dt = \inprod{\widetilde{w}, \zeta} \qqqq \forall \zeta \in \Ws.
\end{equation}
Moreover, we have
\begin{equation}
    \label{eq: riesz representation 2}
    \norm{\widetilde{w}}_{\Ws^\prime}^2 = \inprod{\widetilde{w}, z\left(\widetilde{w}\right)} = \norm{z\left(\widetilde{w}\right)}_{\Ws}^2.
\end{equation}

\begin{theorem}
    \label{theo: optimality conditions}
    An element $f^\ast  \in \Ws^\prime$ is the solution to Problem \eqref{eq: problem formulation} if and only if there exist $u^\ast  \in \Us_0$ and  $p^\ast  \in \Us_T$ such that
    \begin{equation}
        \label{eq: optimality conditions 1}
        a\left(u^\ast , \vphi\right) = \inprod{f^\ast , \vphi}\qqqq \forall\vphi \in \Ws,
    \end{equation}
    and
    \begin{equation}
        \label{eq: optimality conditions 2}
        a^{\prime}\left(p^\ast ,\phi\right) = \int\limits_{Q_T}\left(u^\ast -u_d\right)\phi \dx\dt \qqqq \forall \phi\in \Ws,
    \end{equation}
    along with the gradient condition
    \begin{equation}
        \label{eq: optimality conditions 3}
        p^\ast  + \eta z\left(f^\ast\right) = 0_{\Ws}.
    \end{equation}
\end{theorem}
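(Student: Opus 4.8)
The plan is to characterize the minimizer of the strictly convex functional $J$ by the vanishing of its G\^ateaux derivative and to identify that derivative with a duality pairing against $p^\ast + \eta z\left(f^\ast\right)$. Since \eqref{eq: weak state equation} is linear and well-posed, the solution map $f\mapsto u\left(f\right)$ is linear and bounded from $\Ws^\prime$ into $\Us_0$. For $\delta f\in \Ws^\prime$, write $\delta u := u\left(\delta f\right)\in \Us_0$ for the associated state increment, so that $u\left(f^\ast + s\,\delta f\right) = u\left(f^\ast\right) + s\,\delta u$ and $a\left(\delta u, \vphi\right) = \inprod{\delta f, \vphi}$ for all $\vphi\in \Ws$. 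Differentiating $J$ at $f^\ast$ and using \eqref{eq: riesz representation 1}--\eqref{eq: riesz representation 2} to express the derivative of the $\Ws^\prime$-regularization term as $\eta\inprod{\delta f, z\left(f^\ast\right)}$, I would obtain
\begin{equation*}
    J^\prime\left(f^\ast\right)\delta f = \int\limits_{Q_T}\left(u\left(f^\ast\right) - u_d\right)\delta u \dx\dt + \eta\inprod{\delta f, z\left(f^\ast\right)} \qqqq \forall\delta f\in \Ws^\prime.
\end{equation*}

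The key step is the adjoint identity $a\left(w, q\right) = a^\prime\left(q, w\right)$ for all $w\in \Us_0$ and $q\in \Us_T$, which I would verify term by term. The integration-by-parts formula in time, available through the Bochner characterization of $\Us$ and the traces at $t=0,T$, gives $\inprod{\partial_t w, q} + \inprod{\partial_t q, w} = \left(w\left(\cdot,T\right), q\left(\cdot,T\right)\right)_{\Ls^2\left(\Om\right)} - \left(w\left(\cdot,0\right), q\left(\cdot,0\right)\right)_{\Ls^2\left(\Om\right)} = 0$ since $w\left(\cdot,0\right)=0$ and $q\left(\cdot,T\right)=0$; the divergence-free condition $\dive{\vb} = 0$ together with the homogeneous Dirichlet data yields $\int_{Q_T}\left(\vb\cdot\nabla w\right)q + \left(\vb\cdot\nabla q\right)w \dx\dt = \int_{Q_T}\dive{\left(\vb\,wq\right)}\dx\dt = 0$; and the diffusion term is symmetric.

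Set $u^\ast := u\left(f^\ast\right)$ and $p^\ast := p\left(f^\ast\right)$; by uniqueness of \eqref{eq: weak state equation} and \eqref{eq: weak adjoint equation} these coincide with any pair solving \eqref{eq: optimality conditions 1}--\eqref{eq: optimality conditions 2}. Testing \eqref{eq: weak adjoint equation} with $\phi = \delta u\in \Us_0\sst \Ws$, applying the adjoint identity, and then testing the increment equation with $\vphi = p^\ast\in \Us_T\sst \Ws$ gives $\int_{Q_T}\left(u^\ast - u_d\right)\delta u \dx\dt = a^\prime\left(p^\ast, \delta u\right) = a\left(\delta u, p^\ast\right) = \inprod{\delta f, p^\ast}$. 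Substituting into the derivative yields $J^\prime\left(f^\ast\right)\delta f = \inprod{\delta f, p^\ast + \eta z\left(f^\ast\right)}$ for every $\delta f\in \Ws^\prime$.

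The equivalence then follows. Since $J$ is strictly convex, $f^\ast$ solves \eqref{eq: problem formulation} if and only if $J^\prime\left(f^\ast\right) = 0$, that is, $\inprod{\delta f, p^\ast + \eta z\left(f^\ast\right)} = 0$ for all $\delta f\in \Ws^\prime$; choosing $\delta f$ with $z\left(\delta f\right) = p^\ast + \eta z\left(f^\ast\right)$ forces $\norm{p^\ast + \eta z\left(f^\ast\right)}_{\Ws} = 0$, which is exactly \eqref{eq: optimality conditions 3}. Conversely, the gradient condition makes $J^\prime\left(f^\ast\right)$ vanish, so strict convexity identifies $f^\ast$ as the global minimizer. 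The main obstacle I anticipate is the rigorous justification of the time integration-by-parts formula in this low-regularity anisotropic space-time setting; once that identity is secured, the remaining manipulations are routine duality arguments.
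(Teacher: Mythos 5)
Your proposal is correct and follows essentially the same route as the paper's proof: compute the G\^ateaux derivative of $J$ using linearity of the control-to-state map and the Riesz identities \eqref{eq: riesz representation 1}--\eqref{eq: riesz representation 2}, convert $\int_{Q_T}\left(u^\ast-u_d\right)\delta u\dx\dt$ into $\inprod{\delta f, p^\ast}$ via the adjoint identity $a\left(w,q\right)=a^\prime\left(q,w\right)$ (time integration by parts plus the divergence-free transport term), and conclude from strict convexity that the minimizer is characterized by $p^\ast+\eta z\left(f^\ast\right)=0_{\Ws}$. The only cosmetic difference is that you isolate the adjoint identity as a standalone step and spell out both directions of the equivalence, whereas the paper performs the same manipulations inline and closes with $\nabla J\left(f^\ast\right)=0_{\Ws}$.
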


\begin{proof}
    Take a small variation $\delta f \in \Ws^\prime$ of $f\in \Ws^\prime$. We observe that
    \begin{align*}
        & \norm{u\left(f + \delta f \right)-u_d}^2_{\Ls^2\left(Q_T\right)} - \norm{u\left(f\right) -u_d}^2_{\Ls^2\left(Q_T\right)} =\\
        & \q =\norm{u\left(f + \delta f \right)}^2_{\Ls^2\left(Q_T\right)} - \norm{u\left(f \right)}^2_{\Ls^2\left(Q_T\right)} - 2\int\limits_{Q_T}\left[u\left(f + \delta f \right) - u\left(f\right)\right]u_d\dx\dt\\
        & \q =\norm{u\left(\delta f\right)}^2_{\Ls^2\left(Q_T\right)} + 2\int\limits_{Q_T}u\left(\delta f\right)\left(u\left(f\right)-u_d\right)\dx\dt.
    \end{align*}
    Moreover, by using \eqref{eq: riesz representation 1} and \eqref{eq: riesz representation 2}, we have
    $$
    \norm{f+\delta f}_{\Ws^\prime}^2 - \norm{f}_{\Ws^\prime}^2= \norm{z\left(f+ \delta f\right)}^2_{\Ws} - \norm{z\left(f\right)}^2_{\Ws}= \norm{\delta f}^2_{\Ws^\prime} + 2\inprod{f, z\left(\delta f\right)}.
    $$
    Since $\norm{u\left(\delta f\right)}^2_{\Ls^2\left(Q_T\right)} = o\left(\norm{\delta f}_{\Ws^\prime}\right)$ as $\delta f \rightarrow 0_{\Ws^\prime}$, we obtain
    \begin{equation}
        \label{eq: optimality conditions 4}
        J\left(f + \delta f \right) - J\left(f \right) = o\left(\norm{\delta f}_{\Ws^\prime}\right) + \int\limits_{Q_T}u\left(\delta f\right)\left(u\left(f\right)-u_d\right)\dx\dt + \eta \inprod{f, z\left(\delta f\right)}.
    \end{equation}
    Now, we choose $\phi = u\left(\delta f\right)\in \Us_0$ in \eqref{eq: weak adjoint equation} to get 
    \begin{align*}
        & \int\limits_{Q_T}u\left(\delta f\right)\left(u\left(f\right)-u_d\right)\dx\dt =  \\
        & \q = - \inprod{\partial_t \left(p\left(f\right)\right), u\left(\delta f\right)} +\int\limits_{Q_T}  -\left[\vb\cdot\nabla p\left(f\right)\right]u\left(\delta f\right) + \kappa \nabla p\left(f\right) \cdot \nabla u\left(\delta f\right) \dx \dt.
    \end{align*}
    On the one hand, we integrate by parts with $p\left(f\right)\in \Us_T$ and $u\left(\delta f\right) \in \Us_0$ to obtain 
    $$
    \inprod{\partial_t \left(p\left(f\right)\right), u\left(\delta f\right)} = - \inprod{\partial_t \left(u\left(\delta f\right)\right), p\left(f\right)}.
    $$
    On the other hand, thanks to the divergence theorem, the homogeneous Dirichlet boundary condition, and the assumption $\nabla\cdot \vb \left(\xb, t\right)= 0$ for all $\left(\xb, t\right)\in \Omega\times [0, T]$, we deduce that
    \begin{equation}
        \label{eq: optimality conditions 5}
        \begin{aligned}
            &\int\limits_{Q_T} \left[\vb\cdot\nabla p\left(f\right)\right] u\left(\delta f\right) \dx\dt =\\
            & \q = \int\limits_0^T\int\limits_{\partial\Omega} p\left(f\right)u\left(\delta f\right) \vb \cdot \mathbf{n}^\prime \ds\dt- \int\limits_{Q_T} \left[\vb\cdot\nabla u\left(\delta f\right)\right]p\left(f\right) + p\left(f\right)u\left(\delta f\right)\left(\nabla\cdot \vb\right)\dx\dt \\
            & \q = - \int\limits_{Q_T} \left[\vb\cdot\nabla u\left(\delta f\right)\right]p\left(f\right) \dx\dt,
        \end{aligned}
    \end{equation}
    where $\mathbf{n}^\prime$ denotes the outward normal to $\partial\Omega$. 
    Hence, we obtain
    $$
    \int\limits_{Q_T}u\left(\delta f\right)\left(u\left(f\right)-u_d\right)\dx\dt = a\left(u\left(\delta f\right), p\left(f\right)\right) = \inprod{\delta f, p\left(f\right)}.
    $$
    By substituting this into \eqref{eq: optimality conditions 4} and applying \eqref{eq: riesz representation 1}, we get
    $$
    J\left(f + \delta f \right) - J\left(f \right) = o\left(\norm{\delta f}_{\Ws^\prime}\right) + \inprod{\delta f, p\left(f\right)} + \eta\inprod{\delta f, z\left(f\right)}.
    $$
    Therefore, the gradient of $J$ at $f\in \Ws^\prime$ is $\nabla J\left(f\right) = p\left(f\right) + \eta z\left(f\right)\in \Ws$. We finish the proof by noting that $\nabla J\left(f^\ast\right) = 0_{\Ws}$.
\end{proof}

\section{Reduced optimality system}
\label{sec: reduced optimality system}

Following the approach of Langer et al. in \cite[Section~3]{LSTY2021}, we now reduce the system \eqref{eq: optimality conditions 1}--\eqref{eq: optimality conditions 3} into a variational problem where the control $f^\ast$ is absent. We then prove that solving that problem is equivalent to solving Problem \eqref{eq: problem formulation}, and hence implicitly obtain the unique solvability of the problem.

To eliminate $f^\ast$ from the system \eqref{eq: optimality conditions 1}--\eqref{eq: optimality conditions 3}, we use \eqref{eq: optimality conditions 1}, \eqref{eq: riesz representation 1}, and \eqref{eq: optimality conditions 3}. We arrive at
$$
a\left(u^\ast , \vphi\right) = \inprod{f^\ast , \vphi} = \int\limits_{Q_T} \kappa \nabla z\left(f^\ast\right)  \cdot \nabla \vphi \dx \dt = -\dfrac{1}{\eta} \int\limits_{Q_T} \kappa \nabla p^\ast  \cdot \nabla \vphi \dx \dt \qq \forall \vphi\in \Ws,
$$
and hence
\begin{equation}
    \label{eq: coupled state-adjoint eq1}
    a\left(u^\ast , \vphi\right) + \dfrac{1}{\eta} \int\limits_{Q_T} \kappa \nabla p^\ast  \cdot \nabla \vphi \dx \dt = 0 \qqqq \forall \vphi\in \Ws.
\end{equation}
Notably, this equation expresses a relation between $u^\ast$ and $p^\ast$ without the explicit presence of $f^\ast$. We further manipulate \eqref{eq: optimality conditions 2}. Now, by integrating by parts, we obtain
$$
-\inprod{\partial_t p^\ast, \phi} = \inprod{\partial_t \phi, p^\ast} \qqqq \forall \phi \in \Us_0.
$$
In addition, the arguments of \eqref{eq: optimality conditions 5} gives us
$$
-\int\limits_{Q_T} \left(\vb\cdot\nabla p^\ast\right) \phi \dx\dt = \int\limits_{Q_T} \left(\vb\cdot\nabla \phi\right) p^\ast \dx\dt \qqqq \forall \phi \in \Ws.
$$
Therefore, we deduce that
\begin{equation}
    \label{eq: relation between adjoint and state}
    a^\prime\left(p^\ast, \phi\right) = a\left(\phi, p^\ast\right) \qqqq \forall \phi \in \Us_0,
\end{equation}
which turns \eqref{eq: optimality conditions 2} into
$$
\int\limits_{Q_T}u^\ast\phi\dx\dt - a\left(\phi, p^\ast\right) = \int\limits_{Q_T}u_d\phi\dx\dt\qqqq \forall \phi\in \Us_0.
$$
Motivated by \eqref{eq: coupled state-adjoint eq1} and this equation, our idea is to solve Problem \eqref{eq: problem formulation} via the following variational problem: Given $u_d\in \Ls^2\left(Q_T\right)$, find $\left(\overline{u}, \overline{p}\right)\in \Us_0\times \Ws$ such that
\begin{equation}
    \label{eq: coupled state-adjoint}
    \mathcal{A}\left(\left(\overline{u}, \overline{p}\right), \left(\vphi, \phi\right)\right) = \int\limits_{Q_T}u_d\phi\dx\dt\qqqq \forall \left(\vphi, \phi\right) \in \Ws\times \Us_0,
\end{equation}
where the bilinear form $\mathcal{A}: \left(\Us_0\times \Ws\right)\times \left(\Ws\times \Us_0\right)\rightarrow \mathbb{R}$ is given by
$$
\mathcal{A}\left(\left(u, p\right), \left(\vphi, \phi\right)\right) := a\left(u, \vphi\right) + \dfrac{1}{\eta}\int\limits_{Q_T}\kappa \nabla p \cdot\nabla \vphi\dx\dt + \int\limits_{Q_T}u\phi\dx\dt - a\left(\phi, p\right).
$$
Compared with the problems \eqref{eq: optimality conditions 1}--\eqref{eq: optimality conditions 2}, the trial and test spaces for the state remain unchanged. However, for the adjoint, the trial space is now bigger, while the test space is smaller. In other words, Problem \eqref{eq: coupled state-adjoint} is obtained by enlarging the trial spaces and restricting the test spaces in the original problems. This property, combined with the solvability of the problems \eqref{eq: optimality conditions 1}--\eqref{eq: optimality conditions 2}, guarantees the existence of at least one solution to Problem \eqref{eq: coupled state-adjoint}.

Before concluding the unique solvability of this problem, let us first prove its equivalence with the system \eqref{eq: optimality conditions 1}--\eqref{eq: optimality conditions 3}. Specifically, we have the following result:

\begin{theorem}
    \label{theo: problems equivalence}
    Let $\left(\overline{u}, \overline{p}\right)\in \Us_0\times \Ws$ be a solution to Problem \eqref{eq: coupled state-adjoint}. Then, we have $\left(\overline{u}, \overline{p}\right)\in \Us_0\times \Us_T$. Moreover, $\overline{u} = u^\ast$ in $\Us_0$ and $\overline{p} = p^\ast$ in $\Us_T$, where $u^\ast \in \Us_0$ and $p^\ast\in \Us_T$ are introduced in \cref{theo: optimality conditions}.
\end{theorem}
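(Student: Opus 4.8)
The plan is to split the test space in Problem~\eqref{eq: coupled state-adjoint}, upgrade the time regularity of $\overline{p}$, extract its terminal trace, and finally identify $\left(\overline{u},\overline{p}\right)$ via the optimality conditions of \cref{theo: optimality conditions}. Choosing the test pairs $\left(\vphi,0\right)$ and $\left(0,\phi\right)$ decouples \eqref{eq: coupled state-adjoint} into
\begin{equation}
a\left(\overline{u},\vphi\right) + \frac{1}{\eta}\int\limits_{Q_T}\kappa\nabla\overline{p}\cdot\nabla\vphi\dx\dt = 0 \qq \fa\,\vphi\in\Ws, \tag{$\ast$}
\end{equation}
and
\begin{equation}
\int\limits_{Q_T}\overline{u}\,\phi\dx\dt - a\left(\phi,\overline{p}\right) = \int\limits_{Q_T}u_d\,\phi\dx\dt \qq \fa\,\phi\in\Us_0, \tag{$\ast\ast$}
\end{equation}
both well defined since $a\left(\phi,\overline{p}\right)$ requires only $\phi\in\Us_0$ and $\overline{p}\in\Ws$. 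It then remains to prove that $\overline{p}\in\Us_T$ and that $\left(\overline{u},\overline{p}\right)=\left(u^\ast,p^\ast\right)$.

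First I would show $\overline{p}\in\Us$. Testing $\left(\ast\ast\right)$ with $\xi\in\Cs_0^1\left(Q_T\right)\sst\Us_0$ and invoking the Gelfand triple $\Ws\emb\Ls^2\left(Q_T\right)\emb\Ws^\prime$ (a consequence of the Poincar\'e inequality), the pairing reduces to $\inprod{\pa_t\xi,\overline{p}}=\int\limits_{Q_T}\left(\pa_t\xi\right)\overline{p}\dx\dt=-\pa_t\overline{p}\left(\xi\right)$ by the definition of the distributional time derivative. Solving $\left(\ast\ast\right)$ for this term gives
\begin{equation*}
\pa_t\overline{p}\left(\xi\right) = -\int\limits_{Q_T}\left(\overline{u}-u_d\right)\xi\dx\dt + \int\limits_{Q_T}\left(\vb\cdot\nabla\xi\right)\overline{p}\dx\dt + \int\limits_{Q_T}\kappa\nabla\xi\cdot\nabla\overline{p}\dx\dt.
\end{equation*}
Each term on the right is controlled by a constant multiple of $\norm{\xi}_{\Ws}$---through the Poincar\'e inequality, the boundedness of $\vb$, and the Cauchy--Schwarz inequality in the $\kappa$-weighted inner product---so, by density of $\Cs_0^1\left(Q_T\right)$ in $\Ws$, the functional $\pa_t\overline{p}$ extends boundedly to $\Ws$. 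Hence $\pa_t\overline{p}\in\Ws^\prime$, i.e. $\overline{p}\in\Us$, and the traces $\overline{p}\left(\cdot,0\right)$ and $\overline{p}\left(\cdot,T\right)$ are well defined in $\Ls^2\left(\Om\right)$.

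The delicate step, which I expect to be the main obstacle, is to prove $\overline{p}\left(\cdot,T\right)=0$. With $\overline{p}\in\Us$ in hand, I would substitute the integration-by-parts identity $\inprod{\pa_t\phi,\overline{p}}+\inprod{\pa_t\overline{p},\phi}=\left(\phi\left(\cdot,T\right),\overline{p}\left(\cdot,T\right)\right)_{\Ls^2\left(\Om\right)}$, valid for $\phi\in\Us_0$, into $\left(\ast\ast\right)$, and then replace $\inprod{\pa_t\overline{p},\phi}$ by the expression just obtained, which by density holds for all $\phi\in\Ws\supset\Us_0$. After the advection and diffusion integrals cancel, the identity collapses to $\left(\phi\left(\cdot,T\right),\overline{p}\left(\cdot,T\right)\right)_{\Ls^2\left(\Om\right)}=0$ for every $\phi\in\Us_0$. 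Since $\phi\left(\xb,t\right)=\left(t/T\right)\psi\left(\xb\right)$ belongs to $\Us_0$ and has terminal value $\psi$ for any $\psi\in\Hs^1_0\left(\Om\right)$, the terminal trace has range containing $\Hs^1_0\left(\Om\right)$, which is dense in $\Ls^2\left(\Om\right)$; hence $\overline{p}\left(\cdot,T\right)=0$ and $\overline{p}\in\Us_T$.

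Once $\overline{p}\in\Us_T$ is secured, the identification is routine. The derivation of \eqref{eq: relation between adjoint and state} applies verbatim to $\overline{p}$, turning $\left(\ast\ast\right)$ into $a^\prime\left(\overline{p},\phi\right)=\int\limits_{Q_T}\left(\overline{u}-u_d\right)\phi\dx\dt$ for all $\phi\in\Us_0$; as $\Cs_0^1\left(Q_T\right)\sst\Us_0$ is dense in $\Ws$ and both sides are $\Ws$-continuous, this extends to all $\phi\in\Ws$, which is exactly \eqref{eq: optimality conditions 2}. Setting $\inprod{\overline{f},\vphi}:=a\left(\overline{u},\vphi\right)$ defines $\overline{f}\in\Ws^\prime$ with $\overline{u}=u\left(\overline{f}\right)$, giving \eqref{eq: optimality conditions 1} and identifying $\overline{p}$ as the adjoint state $p\left(\overline{f}\right)$, while $\left(\ast\right)$ together with the Riesz identity \eqref{eq: riesz representation 1} forces $\overline{p}+\eta z\left(\overline{f}\right)=0_{\Ws}$, which is \eqref{eq: optimality conditions 3}. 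Consequently $\nabla J\left(\overline{f}\right)=p\left(\overline{f}\right)+\eta z\left(\overline{f}\right)=\overline{p}+\eta z\left(\overline{f}\right)=0_{\Ws}$, and the strict convexity of $J$ yields $\overline{f}=f^\ast$. Uniqueness of the state and adjoint problems finally gives $\overline{u}=u\left(f^\ast\right)=u^\ast$ and $\overline{p}=p\left(f^\ast\right)=p^\ast$.
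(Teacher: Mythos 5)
Your proof is correct and follows essentially the same route as the paper's: decouple the variational problem by testing with $\left(\vphi,0\right)$ and $\left(0,\phi\right)$, upgrade $\overline{p}$ to $\Us$ by bounding the right-hand side of the adjoint equation on the dense subspace $\Cs_0^1\left(Q_T\right)$, extract $\overline{p}\left(\cdot,T\right)=0$ from the integration-by-parts identity, and then identify $\left(\overline{u},\overline{p}\right)$ with $\left(u^\ast,p^\ast\right)$ through the optimality conditions. The only differences are that you spell out two details the paper leaves implicit, namely the density of terminal traces of $\Us_0$-functions in $\Ls^2\left(\Om\right)$ via $\phi=\left(t/T\right)\psi$ and the explicit verification of the gradient condition before invoking uniqueness of the minimizer.
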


\begin{proof}
    The first statement can be proved by showing that $\overline{p}\in \Us_T$. To do so, we choose $\vphi = 0_{\Ws}$ in \eqref{eq: coupled state-adjoint} to get
    \begin{equation}
        \label{eq: problems equivalence 1}
        \inprod{\partial_t \phi, \overline{p}} = - \int\limits_{Q_T}  \left(\vb\cdot\nabla \phi \right)\overline{p} + \kappa \nabla \phi \cdot \nabla \overline{p} \dx \dt + \int\limits_{Q_T}\left(\overline{u} - u_d\right)\phi\dx\dt \qq \forall \phi \in \Us_0.
    \end{equation}
    For any $\phi\in \Cs^1_0\left(Q_T\right)\subset \Us_0$ and $\overline{p}\in \Ws$, we clearly have (by the definition of $\pa_t \ovl{p}$)
    $$
    \inprod{\partial_t \phi, \overline{p}} = \int\limits_{Q_T} \left(\partial_t \phi \right)\overline{p}\dx\dt = - \partial_t \overline{p}\left(\phi\right),
    $$
    which makes \eqref{eq: problems equivalence 1} become
    $$
    - \partial_t \overline{p}\left(\phi\right) = - \int\limits_{Q_T}  \left(\vb\cdot\nabla \phi \right)\overline{p} + \kappa \nabla \phi \cdot \nabla \overline{p} \dx \dt + \int\limits_{Q_T}\left(\overline{u} - u_d\right)\phi\dx\dt \qq \forall \phi \in \Cs^1_0\left(Q_T\right).
    $$
    Since $\Cs^1_0\left(Q_T\right)$ is dense in $\Ws$, the right-hand side of this equality defines a bounded linear functional of $\phi$ on $\Ws$. Therefore, we imply that $\partial_t \overline{p}\in \Ws^\prime$, and hence $\overline{p}\in \Us$. Moreover, the following equality holds
    $$
    -\inprod{\partial_t \overline{p}, \phi} = -\int\limits_{Q_T}  \left(\vb\cdot\nabla \phi \right)\overline{p} + \kappa \nabla \phi \cdot \nabla \overline{p} \dx \dt + \int\limits_{Q_T}\left(\overline{u} - u_d\right)\phi\dx\dt \qq \forall \phi \in \Ws.
    $$
    We then take $\phi \in \Us_0 \sst \Ws$ and subtract this from \eqref{eq: problems equivalence 1} to obtain $\inprod{\partial_t \phi, \overline{p}} + \inprod{\partial_t \overline{p}, \phi} = 0$, which leads to
    $$
    \int\limits_{\Omega} \phi\left(\xb, T\right)\overline{p}\left(\xb, T\right)\dx = 0 \qqqq \forall \phi \in \Us_0,
    $$
    using the integration by parts. Therefore, we conclude that $\overline{p}\left(\cdot, T\right) = 0$, and thus $\overline{p} \in \Us_T$.

    We now prove the second statement. Thanks to the arguments for \eqref{eq: relation between adjoint and state}, we can rewrite \eqref{eq: problems equivalence 1} as
    $$
    a^\prime\left(\overline{p}, \phi\right) = \int\limits_{Q_T}\left(\overline{u} - u_d\right)\phi\dx\dt \qqqq \forall \phi \in \Us_0.
    $$
    By invoking the density of $\Us_0$ in $\Ws$, we hence imply 
    \begin{equation}
        \label{eq: problems equivalence 2}
        a^\prime\left(\overline{p}, \phi\right) = \int\limits_{Q_T}\left(\overline{u} - u_d\right)\phi\dx\dt \qqqq \forall \phi \in \Ws,
    \end{equation}
    which recovers Problem \eqref{eq: optimality conditions 2}. Subsequently, we choose $\phi = 0_{\Us_0}$ in \eqref{eq: coupled state-adjoint} to obtain
    $$
    a\left(\overline{u}, \vphi\right) = -\dfrac{1}{\eta} \int\limits_{Q_T}  \kappa \nabla \overline{p}\cdot \nabla \vphi \dx \dt \qqqq \forall \vphi \in \Ws.
    $$
    By further defining $\overline{z}:= -\frac{1}{\eta}\overline{p}$ in $\Ws$ and using \eqref{eq: riesz representation 1}, we conclude the existence of an element $\overline{f}\in \Ws^\prime$ such that
    \begin{equation}
        \label{eq: problems equivalence 3}
        a\left(\overline{u}, \vphi\right) = \inprod{\overline{f}, \vphi} \qqqq \forall \vphi \in \Ws,
    \end{equation}
    which corresponds precisely to Problem \eqref{eq: optimality conditions 1}. The conclusion follows by combining \Cref{theo: optimality conditions} with \eqref{eq: problems equivalence 2} and \eqref{eq: problems equivalence 3}.
\end{proof}

From this theorem, we see that the system \eqref{eq: optimality conditions 1}--\eqref{eq: optimality conditions 3} is equivalent to Problem \eqref{eq: coupled state-adjoint}. Notably, the latter offers the advantage of enabling an all-at-once space-time treatment of the state and adjoint, as opposed to classical gradient-based sequential methods. As a result, we consider discretizing Problem \eqref{eq: coupled state-adjoint} as an efficient way to discretize our optimal control problem.

Moreover, \Cref{theo: problems equivalence} guarantees the uniqueness of the solution to Problem \eqref{eq: coupled state-adjoint}. This follows from the unique determination of the pair $\left(u^\ast, p^\ast\right)\in \Us_0\times \Us_T$ and the second part of the theorem. We thus obtain the following result:

\begin{corollary}
    \label{cor: uniquess of solution to coupled state-adjoint problem}
    Given $u_d\in \Ls^2\left(Q_T\right)$. Then, Problem \eqref{eq: coupled state-adjoint} admits a unique solution $\left(\overline{u}, \overline{p}\right)\in \Us_0\times \Us_T$.
\end{corollary}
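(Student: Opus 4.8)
The plan is to deduce the corollary directly from \Cref{theo: problems equivalence} together with the well-posedness facts already assembled in \Cref{sec: preliminaries}, with no new analytic work required. Existence of at least one solution $\left(\overline{u}, \overline{p}\right) \in \Us_0 \times \Ws$ has already been argued in the paragraph preceding \Cref{theo: problems equivalence}: since Problem \eqref{eq: coupled state-adjoint} arises from the solvable optimality system \eqref{eq: optimality conditions 1}--\eqref{eq: optimality conditions 2} by enlarging the trial space for the adjoint and shrinking its test space, the pair $\left(u^\ast, p^\ast\right)$ furnished by \Cref{theo: optimality conditions} is itself a solution. The first part of \Cref{theo: problems equivalence} then upgrades the regularity of \emph{any} such solution from $\Us_0 \times \Ws$ to $\Us_0 \times \Us_T$, so that every solution indeed lies in the space claimed in the statement.

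For uniqueness, I would first record that the pair $\left(u^\ast, p^\ast\right)$ is itself uniquely determined. Indeed, Problem \eqref{eq: problem formulation} possesses a unique minimizer $f^\ast \in \Ws^\prime$ because $J$ is continuous and strictly convex on the Hilbert space $\Ws^\prime$, as noted after \eqref{eq: problem formulation}. Inserting $f^\ast$ into the well-posed state equation \eqref{eq: weak state equation} determines $u^\ast = u\left(f^\ast\right)$ uniquely, and inserting $u^\ast$ into the well-posed adjoint equation \eqref{eq: weak adjoint equation} determines $p^\ast = p\left(f^\ast\right)$ uniquely. Hence the pair $\left(u^\ast, p^\ast\right) \in \Us_0 \times \Us_T$ is unambiguously fixed.

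Finally, I would invoke the second part of \Cref{theo: problems equivalence}: any solution $\left(\overline{u}, \overline{p}\right)$ to Problem \eqref{eq: coupled state-adjoint} satisfies $\overline{u} = u^\ast$ in $\Us_0$ and $\overline{p} = p^\ast$ in $\Us_T$. Since $\left(u^\ast, p^\ast\right)$ does not depend on the particular solution under consideration, any two solutions coincide, which yields uniqueness and completes the proof.

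There is essentially no hard analytic step remaining, as the heavy lifting---the regularity bootstrap $\overline{p} \in \Us_T$ and the identification with the optimality system---was already performed in \Cref{theo: problems equivalence}. The only point requiring care is to make the chain of unique determinations $f^\ast \mapsto u^\ast \mapsto p^\ast$ fully explicit, so that the object $\left(u^\ast, p^\ast\right)$ to which every solution is identified is genuinely independent of the chosen solution; this is precisely what converts the a priori characterization of \Cref{theo: problems equivalence} into a bona fide uniqueness statement.
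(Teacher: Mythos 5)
Your proposal is correct and follows essentially the same route as the paper: existence via the trial-space-enlargement/test-space-restriction observation preceding \Cref{theo: problems equivalence}, and uniqueness by combining the unique determination of $\left(u^\ast, p^\ast\right)$ with the identification $\overline{u} = u^\ast$, $\overline{p} = p^\ast$ from the second part of that theorem. The only difference is that you spell out the chain $f^\ast \mapsto u^\ast \mapsto p^\ast$ explicitly, which the paper leaves implicit; this is a harmless (and arguably welcome) elaboration rather than a new argument.
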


Finally, it is important to note that once Problem \eqref{eq: coupled state-adjoint} is solved, the optimal control $f^\ast$ can be obtained directly via Problem \eqref{eq: optimality conditions 1}.

\section{Finite element discretization}
\label{sec: finite element discretization}

Assume that $\Omega$ is a polyhedron in $\mathbb{R}^m$ $\left(m=1\text{ or } 2\right)$. Let $\left\{\mathcal{T}_h\right\}_{h\in \left(0,h^\dagger\right)}$ be a family of quasi-uniform, interface-fitted meshes of the cylinder $Q_T = \Omega\times \left(0,T\right)$, with mesh size $h\in \left(0,h^\dagger\right)$, where $h^\dagger>0$ is given \cite[Section~3.1]{NLPT2024}. Notably, this triangulation does not allow $\Gamma^\ast$ to cut the triangles (or tetrahedra) arbitrarily. In particular, $\Gamma^\ast$ goes through exactly two (or three) vertices of all interface triangles (or tetrahedra), and forms with each corresponding edge (or face) a small discrepancy region. We denote by $\Gamma_h^\ast$ the linear approximation of $\Gamma^\ast$, consisting of all edges (or faces) with vertices lying on $\Gamma^\ast$. This discrete interface partitions $Q_T$ into two subdomains $Q_{1,h}$ and $Q_{2,h}$. Finally, the mismatch between space-time subdomains and their discrete counterparts is defined as
$$
S_h:= \left(Q_1\setminus \overline{Q_{1,h}}\right) \cup \left(Q_2\setminus \overline{Q_{2,h}}\right).
$$

Throughout this section, the notation $C$ denotes a generic positive constant that
does not depend on the control $f^\ast$ or the mesh size $h$, but may depend on the cylinder $Q_T$, the position of $\Gamma^\ast$, the norm $\norm{\vb}_{\LLs^\infty\left(Q_T\right)}$, the coefficient $\kappa$, and the parameter $\eta$. Its value may vary depending on the context.

\subsection{Space-time interface-fitted method}

On the mesh $\mathcal{T}_h$, we define the finite element spaces as
$$
\Ws_h :=\left\{w_h \in \Cs\left(\overline{Q_T}\right)\mid w_{h\, \mid \, K}\in \mathbb{P}_1\left(K\right)\ \forall K\in \mathcal{T}_h\right\}\cap \Ws,
$$
and
$$
\Us_h :=\Ws_h \cap \Us_0.
$$
The Petrov-Galerkin approximation of Problem \eqref{eq: coupled state-adjoint} is then given by: Determine $\left(\overline{u}_h, \overline{p}_h\right)\in \Us_h\times \Ws_h$ such that
\begin{equation}
    \label{eq: discrete coupled state-adjoint}
    \mathcal{A}_h\left(\left(\overline{u}_h, \overline{p}_h\right), \left(\vphi_h, \phi_h\right)\right) = \int\limits_{Q_T}u_d\phi_h\dx\dt \qqqq \forall \left(\vphi_h, \phi_h\right) \in \Ws_h\times \Us_h,
\end{equation}
where the bilinear form $\mathcal{A}_h: \left(\Us_0\times \Ws\right)\times \left(\Ws\times \Us_0\right)\rightarrow \mathbb{R}$ is given by
$$
\mathcal{A}_h\left(\left(u, p\right), \left(\vphi, \phi\right)\right) := a_h\left(u, \vphi\right) + \dfrac{1}{\eta}\int\limits_{Q_T}\kappa_h \nabla p \cdot\nabla \vphi\dx\dt + \int\limits_{Q_T}u\phi\dx\dt - a_h\left(\phi, p\right).
$$
The bilinear form $a_h:\Us_0 \times \Ws\rightarrow \mathbb{R}$ is defined as
$$
a_h\left(u, \vphi\right) := \inprod{\partial_t u,\vphi} + \int\limits_{Q_T} \left(\vb\cdot \nabla u\right)\vphi +  \kappa_h \nabla u \cdot \nabla \vphi \dx \dt,
$$
and $\kappa_h$ is a mesh-dependent positive constant
$$
\kappa_h := 
\begin{cases}
    \kappa_{1}>0 & \text{in} \q Q_{1,h}, \\ 
    \kappa_{2}>0 & \text{in} \q Q_{2,h}.
\end{cases}
$$

Regarding numerical analysis, let us introduce the following seminorms
$$
\vertiii{w}^2 : = \sum_{i=1}^2\int\limits_{Q_{i,h}} \kappa_i \abs{\nabla w}^2 \dx \dt \qqqq \fa w \in \Hs^{1,0}\left(Q_{1,h}\cup Q_{2,h}\right),
$$
and
$$
\vertiii{w}_{\ast}^2 := \vertiii{w}^2 + \vertiii{z_h\left(\partial_t w\right)}^2\qqqq \forall w \in \Hs^{1}\left(Q_{1,h}\cup Q_{2,h}\right),
$$
where $z_h\left(\partial_t w\right)\in \Ws_h$ denotes the unique solution to the problem
\begin{equation}
    \label{eq: discrete riesz representation}
    \int\limits_{Q_T} \kappa_h \nabla z_h\left(\partial_t w\right) \cdot \nabla \zeta_h \dx \dt = \sum_{i=1}^2\int\limits_{Q_{i,h}} \left(\partial_t w\right)\zeta_h \dx\dt \qqqq \forall \zeta_h\in \Ws_h.
\end{equation}
Since these seminorms are defined for functions whose gradient may be discontinuous across $\Gamma_h^\ast$, they are well-suited for the analysis of discrete problems. In what follows, we equip the spaces $\Ws_h$ and $\Us_h$ with the seminorms $\vertiii{\cdot}$ and $\vertiii{\cdot}_\ast$, respectively. 

\begin{lemma}
    \label{lem: discrete bnb1}
    There exists a constant $C>0$ such that for all $\left(u_h, p_h\right)\in \Us_h\times \Ws_h$, the following stability condition holds
    $$
    \sup_{\left(\vphi_h, \phi_h\right) \in \Ws_h\times \Us_h\setminus \left\{\left(0_{\Ws_h},0_{\Us_h}\right)\right\}} \dfrac{\mathcal{A}_h\left(\left(u_h, p_h\right), \left(\vphi_h, \phi_h\right)\right)}{\sqrt{\vertiii{\vphi_h}^2 + \vertiii{\phi_h}^2_\ast}} \ge C \sqrt{\vertiii{u_h}^2_\ast + \vertiii{p_h}^2}.
    $$
\end{lemma}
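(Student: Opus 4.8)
The plan is to establish this discrete inf-sup (Banach--Ne\v{c}as--Babu\v{s}ka) condition constructively: for each trial pair $\left(u_h, p_h\right)\in\Us_h\times\Ws_h$ I would exhibit an explicit test pair $\left(\vphi_h,\phi_h\right)\in\Ws_h\times\Us_h$ for which the numerator $\mathcal{A}_h\big((u_h,p_h),(\vphi_h,\phi_h)\big)$ is bounded below by a multiple of $\vertiii{u_h}_\ast^2+\vertiii{p_h}^2$, while the test seminorm $\sqrt{\vertiii{\vphi_h}^2+\vertiii{\phi_h}_\ast^2}$ is bounded above by a multiple of $\sqrt{\vertiii{u_h}_\ast^2+\vertiii{p_h}^2}$. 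Dividing the two estimates then produces the claim with an $h$-independent constant.

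Before the construction I would record two building blocks. First, a one-sided coercivity of $a_h$ on the diagonal: since $u_h\in\Us_h$ vanishes at $t=0$, integration by parts in time gives $\inprod{\pa_t u_h, u_h}=\tfrac12\norm{u_h(\cdot,T)}_{\Ls^2(\Om)}^2\ge0$, and because $\nabla\cdot\vb=0$ with $u_h=0$ on $\pa\Om$ the convection contribution $\int_{Q_T}(\vb\cdot\nabla u_h)u_h\dx\dt$ vanishes, whence $a_h(u_h,u_h)\ge\vertiii{u_h}^2$. Second, the identity linking the time derivative to the $z_h$-part of $\vertiii{\cdot}_\ast$: since $u_h$ is globally continuous and piecewise affine, its broken time derivative coincides with $\pa_t u_h\in\Ls^2\left(Q_T\right)$, so $\sum_i\int_{Q_{i,h}}(\pa_t u_h)\zeta_h\dx\dt=\inprod{\pa_t u_h,\zeta_h}$ for all $\zeta_h\in\Ws_h$; testing the defining relation \eqref{eq: discrete riesz representation} of $z_h(\pa_t u_h)$ with $\zeta_h=z_h(\pa_t u_h)$ then yields $\inprod{\pa_t u_h, z_h(\pa_t u_h)}=\vertiii{z_h(\pa_t u_h)}^2$. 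Combining this with a Cauchy--Schwarz bound on the diffusion term and a Cauchy--Schwarz estimate followed by a spatial Friedrichs inequality on the convection term (to return from $\norm{z_h(\pa_t u_h)}_{\Ls^2(Q_T)}$ to $\vertiii{z_h(\pa_t u_h)}$), I would obtain $a_h(u_h, z_h(\pa_t u_h))\ge\vertiii{z_h(\pa_t u_h)}^2 - C\vertiii{u_h}\,\vertiii{z_h(\pa_t u_h)}$.

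With these in hand I would test with $\vphi_h := u_h + \delta\, z_h(\pa_t u_h) + \mu\, p_h\in\Ws_h$ and $\phi_h := \mu\, u_h\in\Us_h$, for constants $\delta,\mu>0$ fixed later. The decisive feature is that the off-diagonal contributions $\mu\,a_h(u_h,p_h)$ produced by the second and fourth terms of $\mathcal{A}_h$ cancel exactly because the two coefficients agree, leaving
\[
\mathcal{A}_h\big((u_h,p_h),(\vphi_h,\phi_h)\big)=a_h\big(u_h,u_h+\delta z_h(\pa_t u_h)\big)+\frac{1}{\eta}\int_{Q_T}\kappa_h\nabla p_h\cdot\nabla\big(u_h+\delta z_h(\pa_t u_h)\big)\dx\dt+\frac{\mu}{\eta}\vertiii{p_h}^2+\mu\norm{u_h}_{\Ls^2(Q_T)}^2 .
\]
Using the two building blocks and a small-$\delta$ Young inequality, the first term is bounded below by $c_0\vertiii{u_h}_\ast^2$ (with $c_0\sim\delta$); the coupling integral is absorbed via Cauchy--Schwarz and Young into a fraction of $c_0\vertiii{u_h}_\ast^2$ and of $\tfrac{\mu}{\eta}\vertiii{p_h}^2$, provided $\mu$ is chosen large enough relative to $\delta$ and $\eta$. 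This yields $\mathcal{A}_h\big((u_h,p_h),(\vphi_h,\phi_h)\big)\ge c_1\big(\vertiii{u_h}_\ast^2+\vertiii{p_h}^2\big)$. For the denominator, the triangle inequality for $\vertiii{\cdot}$, the estimate $\vertiii{u_h}+\delta\vertiii{z_h(\pa_t u_h)}\le C\vertiii{u_h}_\ast$, and the homogeneity $\vertiii{\mu u_h}_\ast=\mu\vertiii{u_h}_\ast$ (from linearity of $z_h$ and of $\pa_t$) give $\vertiii{\vphi_h}^2+\vertiii{\phi_h}_\ast^2\le C\big(\vertiii{u_h}_\ast^2+\vertiii{p_h}^2\big)$, and the two bounds combine to close the argument.

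I anticipate the main obstacle to lie in the $z_h(\pa_t u_h)$ contribution: one must justify that the broken and distributional time derivatives of $u_h$ coincide so that the identity $\inprod{\pa_t u_h, z_h(\pa_t u_h)}=\vertiii{z_h(\pa_t u_h)}^2$ is legitimate, and then calibrate $\delta$ (small) and $\mu$ (large) so that every cross term is absorbed without collapsing the lower bound. Throughout, the constants must be kept independent of $h$, which rests on the quasi-uniformity of the meshes, the mesh-consistency of $\kappa_h$ with the decomposition $Q_{1,h}\cup Q_{2,h}$, and the fact that the Friedrichs constant depends only on the fixed geometry of $Q_T$.
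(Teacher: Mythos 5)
Your proposal is correct and follows essentially the same route as the paper's proof: the paper also tests with $\vphi_h=\mu u_h+z_h\left(\partial_t u_h\right)+\lambda p_h$ and $\phi_h=\lambda u_h$ (your choice is the same up to rescaling, which leaves the quotient unchanged), exploits the exact cancellation of the $a_h\left(u_h,p_h\right)$ cross terms, and uses the same building blocks — coercivity of $a_h$ on the diagonal via integration by parts and $\nabla\cdot\vb=0$, the discrete Riesz identity from \eqref{eq: discrete riesz representation}, the Poincar\'e--Steklov inequality \eqref{eq: compare L2 and the W norm}, and Young's inequality to calibrate the parameters. No substantive differences to report.
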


\begin{proof}
    We first fix $\left(u_h,p_h\right)\in \left(\Us_h, \Ws_h\right)$ and choose $\vphi_h = \mu u_h + z_h\left(\partial_t u_h\right)+\lambda p_h\in \Ws_h$ and $\phi_h = \lambda u_h\in \Us_h$, where $\mu>0, \lambda>0$ are sufficiently large numbers that will be specified later. Then, we have
    \begin{equation}
        \label{eq: bnb1 1}
        \begin{aligned}
            &\mathcal{A}_h\left(\left(u_h, p_h\right), \left(\vphi_h, \phi_h\right)\right) = \\
            & \, = \mu a_h\left(u_h,u_h\right) + a_h\left(u_h,z_h\left(\partial_t u_h\right)\right) + \dfrac{1}{\eta}\int\limits_{Q_T} \kappa_h \nabla p_h \cdot \nabla \left(\mu u_h + z_h\left(\partial_t u_h\right)\right)\dx\dt \\
            & \q + \dfrac{\lambda}{\eta}\vertiii{p_h}^2 + \lambda \norm{u_h}^2_{\Ls^2\left(Q_T\right)}.
        \end{aligned}
    \end{equation}
    For the first term on the right-hand side of \eqref{eq: bnb1 1}, we integrate by parts with $u_h\in \Us_h$ and apply the technique in \eqref{eq: optimality conditions 5}. The following estimate holds
    \begin{equation}
        \label{eq: bnb1 2}
        a_h\left(u_h,u_h\right) = \dfrac{1}{2}\norm{u_h\left(\cdot, T\right)}^2_{\Ls^2\left(\Omega\right)} - \dfrac{1}{2}\norm{u_h\left(\cdot, 0\right)}^2_{\Ls^2\left(\Omega\right)} + \vertiii{u_h}^2\ge \vertiii{u_h}^2.
    \end{equation}
    By using \eqref{eq: discrete riesz representation} and the Cauchy-Schwarz inequality, we bound the second term on the right-hand side of \eqref{eq: bnb1 1} as follows
    \begin{align*}
        & a_h\left(u_h, z_h\left(\partial_t u_h\right)\right)= \\
        & \q = \vertiii{z_h\left(\partial_t u_h\right)}^2 + \int\limits_{Q_T}  \left(\vb\cdot\nabla u_h \right)z_h\left(\partial_t u_h\right) + \kappa_h \nabla u_h \cdot \nabla z_h\left(\partial_t u_h\right) \dx \dt \\
        & \q \ge \vertiii{z_h\left(\partial_t u_h\right)}^2 - \left(\dfrac{C}{4\epsilon}\vertiii{u_h}^2 + \epsilon\norm{z_h\left(\partial_t u_h\right)}^2_{\Ls^2\left(Q_T\right)}\right)\\
        & \qq - \left(\dfrac{1}{4\epsilon}\vertiii{u_h}^2 + \epsilon\vertiii{z_h\left(\partial_t u_h\right)}^2\right),
    \end{align*}
    for any $\epsilon>0$. Moreover, we recall from \cite[Lemma~3.27]{Ern2021} the Poincar{\' e}–Steklov inequality
    \begin{equation}
        \label{eq: compare L2 and the W norm}
        \norm{w}_{\Ls^2\left(Q_T\right)}\le C\vertiii{w}\qqqq \forall w\in \Ws.
    \end{equation}
    After applying this inequality and rearranging the terms, we end up with
    \begin{equation}
        \label{eq: bnb1 3}
        a_h\left(u_h, z_h\left(\partial_t u_h\right)\right) \ge \left(1 - C\epsilon\right)\vertiii{z_h\left(\partial_t u_h\right)}^2 - \dfrac{C}{\epsilon}\vertiii{u_h}^2.
    \end{equation}
    We now estimate the third term on the right-hand side of \eqref{eq: bnb1 1}. By the Cauchy-Schwarz inequality, we deduce that
    \begin{align*}
        & \int\limits_{Q_T} \kappa_h \nabla p_h \cdot \nabla \left(\mu u_h + z_h\left(\partial_t u_h\right)\right)\dx\dt \ge \\
        & \q \ge -\mu\left(\dfrac{1}{2\eta}\vertiii{p_h}^2 + \dfrac{\eta}{2}\vertiii{u_h}^2\right) - \left(\dfrac{1}{4\epsilon}\vertiii{p_h}^2 + \epsilon\vertiii{z_h\left(\partial_t u_h\right)}^2\right)\\
        & \q = -\dfrac{\mu \eta}{2}\vertiii{u_h}^2 -\epsilon \vertiii{z_h\left(\partial_t u_h\right)}^2 -\left(\dfrac{\mu}{2\eta} + \dfrac{1}{4\epsilon}\right)\vertiii{p_h}^2.
    \end{align*}
    By inserting \eqref{eq: bnb1 2}, \eqref{eq: bnb1 3}, and this into \eqref{eq: bnb1 1}, we arrive at
    \begin{align*}
        \mathcal{A}_h\left(\left(u_h, p_h\right), \left(\vphi_h, \phi_h\right)\right) &\ge \left(\dfrac{\mu}{2} - \dfrac{C}{\epsilon}\right)\vertiii{u_h}^2 + \left(1-C\epsilon -\dfrac{\epsilon}{\eta}\right) \vertiii{z_h\left(\partial_t u_h\right)}^2 \\
        & \q + \dfrac{1}{\eta}\left(\lambda -\dfrac{\mu}{2\eta} - \dfrac{1}{4\epsilon}\right)\vertiii{p_h}^2.
    \end{align*}
    Here, we choose sufficiently small $\epsilon>0$ such that $1-C\epsilon -\frac{\epsilon}{\eta}\ge \frac{1}{2}$, then choose sufficiently large $\mu>0$ and $\lambda>0$ such that $\frac{\mu}{2} - \frac{C}{\epsilon}\ge \frac{1}{2}$ and $\lambda -\frac{\mu}{2\eta} - \frac{1}{4\epsilon} \ge \frac{\eta}{2}$. Consequently, we have
    \begin{align*}
        \mathcal{A}_h\left(\left(u_h, p_h\right), \left(\vphi_h, \phi_h\right)\right) &\ge \dfrac{1}{2}\left(\vertiii{u_h}^2 +\vertiii{z_h\left(\partial_t u_h\right)}^2 + \vertiii{p_h}^2\right) \\
        & = \dfrac{1}{2} \left(\vertiii{u_h}^2_\ast + \vertiii{p_h}^2\right).
    \end{align*}
    Moreover, it follows from the Cauchy-Schwarz inequality that
    \begin{align*}
        \vertiii{\vphi_h}^2 + \vertiii{\phi_h}^2_\ast &= \vertiii{\mu u_h + z_h\left(\partial_t u_h\right)+\lambda p_h}^2 + \vertiii{\lambda u_h}^2_\ast\\
        & \le \left(\mu^2 +1 +\lambda^2\right)\left(\vertiii{u_h}^2 + \vertiii{z_h\left(\partial_t u_h\right)}^2 + \vertiii{p_h}^2\right) + \lambda^2\vertiii{u_h}^2_\ast \\
        & \le C\left(\vertiii{u_h}^2_\ast + \vertiii{p_h}^2\right).
    \end{align*}
    Therefore, we conclude that
    $$
    \mathcal{A}_h\left(\left(u_h, p_h\right), \left(\vphi_h, \phi_h\right)\right) \ge C\sqrt{\vertiii{u_h}^2_\ast + \vertiii{p_h}^2} \sqrt{\vertiii{\vphi_h}^2 + \vertiii{\phi_h}^2_\ast}.
    $$
    By letting $\left(u_h,p_h\right)$ vary over $\Us_h\times \Ws_h$, we obtain the desired result.
\end{proof}

This lemma and the fact that the discrete trial and test spaces are of the same dimension ensure the unique solvability of Problem \eqref{eq: discrete coupled state-adjoint}. 

\begin{theorem}
    \label{theo: discrete well-posedness}
    For a given $u_d\in \Ls^2\left(Q_T\right)$, there exists a unique solution $\left(\overline{u}_h, \overline{p}_h\right)\in \Us_h\times \Ws_h$ to Problem \eqref{eq: discrete coupled state-adjoint}.
\end{theorem}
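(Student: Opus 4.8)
The plan is to recast Problem \eqref{eq: discrete coupled state-adjoint} as a square linear system and then extract well-posedness from the discrete stability estimate of \Cref{lem: discrete bnb1}. First I would note that both the trial space $\Us_h\times \Ws_h$ and the test space $\Ws_h\times \Us_h$ are finite-dimensional with the common dimension $\dim \Us_h + \dim \Ws_h$. Fixing bases for the two spaces, the bilinear form $\mathcal{A}_h$ is represented by a square matrix, so existence of a solution for every right-hand side is equivalent to uniqueness. It therefore suffices to show that the homogeneous problem, obtained by setting $u_d = 0$, admits only the trivial solution.

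Before invoking the lemma, I would verify that the quantity $\sqrt{\vertiii{u_h}^2_\ast + \vertiii{p_h}^2}$ on the right-hand side of \Cref{lem: discrete bnb1} is a genuine norm on $\Us_h\times \Ws_h$, and not merely a seminorm. Because the discrete subdomains $Q_{1,h}$ and $Q_{2,h}$ tile $Q_T$ and $\kappa_h = \kappa_i$ on $Q_{i,h}$, one has $\vertiii{w}^2 = \int_{Q_T}\kappa_h\abs{\nabla w}^2\dx\dt$; the vanishing of this quantity forces $\nabla w = 0$ almost everywhere, and since every element of $\Ws_h\subset \Ws$ vanishes on $\partial\Omega\times(0,T)$, the Poincar\'e--Steklov inequality \eqref{eq: compare L2 and the W norm} then yields $w = 0$. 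Thus $\vertiii{\cdot}$ is a norm on $\Ws_h$, and since $\vertiii{w}_\ast \ge \vertiii{w}$, the quantity $\vertiii{\cdot}_\ast$ is a norm on $\Us_h$.

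With this in place, uniqueness follows at once. If $(u_h,p_h)\in \Us_h\times \Ws_h$ solves the homogeneous problem, then $\mathcal{A}_h((u_h,p_h),(\vphi_h,\phi_h)) = 0$ for every test pair, so the supremum in the stability estimate equals zero. \Cref{lem: discrete bnb1} then forces $\sqrt{\vertiii{u_h}^2_\ast + \vertiii{p_h}^2} = 0$, whence $u_h = 0$ and $p_h = 0$ by the preceding paragraph. Injectivity of a square linear map gives bijectivity, and therefore Problem \eqref{eq: discrete coupled state-adjoint} possesses a unique solution $(\overline{u}_h,\overline{p}_h)$ for every $u_d\in \Ls^2(Q_T)$.

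I expect the argument to be essentially routine, with the only delicate point being the upgrade from seminorm to norm: one must confirm that $\vertiii{\cdot}$ controls the full $\Ws$-norm on the finite element space, which is precisely what \eqref{eq: compare L2 and the W norm} delivers, together with the observation that $Q_{1,h}\cup Q_{2,h}$ exhausts $Q_T$ so that no gradient information is discarded by the seminorm. Once this is settled, the equal-dimension reduction makes the discrete inf-sup condition of \Cref{lem: discrete bnb1} both necessary and sufficient for unique solvability.
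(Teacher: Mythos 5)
Your proposal is correct and follows exactly the route the paper takes: the paper's entire justification is the one-sentence remark that \Cref{lem: discrete bnb1} together with the equality $\dim(\Us_h\times\Ws_h)=\dim(\Ws_h\times\Us_h)$ yields unique solvability, which is precisely your square-system-plus-injectivity argument. Your additional check that $\vertiii{\cdot}$ and $\vertiii{\cdot}_\ast$ are genuine norms on the discrete spaces (via \eqref{eq: compare L2 and the W norm} and the fact that $Q_{1,h}\cup Q_{2,h}$ exhausts $Q_T$) is a detail the paper leaves implicit but is needed for the argument to close, and you handle it correctly.
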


\subsection{Error estimates}
\label{subsec: error estimates}

This section estimates the error of discretizing Problem \eqref{eq: coupled state-adjoint} by the space-time interface-fitted method. To derive the desired estimates, we need several auxiliary results. 

First, consider a local high regularity function $w \in \Hs^s\left(Q_1 \cup Q_2\right)$ and denote by $w_i \in \Hs^s\left(Q_i\right)$ its restriction to $Q_i\ \left(i=1,2\right)$, where $s > 0$ is given. Recall from \cite[Theorem~2.30]{Ern2021} that if $\Gamma^\ast$ is Lipschitz continuous in $\mathbb{R}^{m+1}$ $\left(m=1 \text{ or } 2\right)$, then there exist smooth extensions $\Es_i: \Hs^s\left(Q_i\right)\rightarrow \Hs^s\left(Q_T\right)$ such that
\begin{equation}
    \label{eq: extension operator}
    \Es_i w=w_i \qq \text{in } Q_i, \qqqq \norm{\Es_i w}_{\Hs^s\left(Q_T\right)} \leq C\norm{w_i}_{\Hs^s\left(Q_i\right)}\qq (i=1,2).
\end{equation}

On the other hand, while the gradient of the solution to Problem \eqref{eq: coupled state-adjoint} is discontinuous across $\Gamma^\ast$, that of Problem \eqref{eq: discrete coupled state-adjoint} can only pose discontinuity across $\Gamma^\ast_h$. As a result, the space $\Hs^s\left(Q_{1,h} \cup Q_{2,h}\right)$ is more favorable than $\Hs^s\left(Q_1 \cup Q_2\right)$ when dealing with Problem \eqref{eq: discrete coupled state-adjoint}. To bridge these two spaces, let us introduce the following operator 
$$
\pi_s: \Hs^s\left(Q_1 \cup Q_2\right) \rightarrow \Hs^s\left(Q_{1,h} \cup Q_{2,h}\right),\qq w \mapsto \pi_s w := 
\begin{cases}
    \Es_1 w & \text{in}\q Q_{1,h}, \\
    \Es_2 w & \text{in}\q Q_{2,h}.
\end{cases}
$$
Clearly, $\pi_s w$ and $w$ coincide everywhere in $Q_T$ except on the mismatch region $S_h$. Regarding their difference within this region, we combine \cite[Theorems~3.3 and 3.2]{NLPT2024} and \cite[Lemma~3.3]{NLPT2024} to get the following lemma:

\begin{lemma}
    \label{lem: properties of pi}
    If $w\in \Hs^s\left(Q_1\cup Q_2\right)$ with $s\in \left[2, \frac{m+3}{2}\right]$, then we can find $h^\dagger >0$ such that for all $h\in \left(0, h^\dagger\right)$, the following estimate holds
    $$
    \norm{\kappa_h \nabla\left(\pi_s w\right) - \kappa \nabla w} _{\LLs^2\left(S_h\right)}\le C 
    \begin{Bmatrix}
        h\sqrt{\abs{\log h}}\\
        h^{\frac{2}{3}}
    \end{Bmatrix}\norm{w}_{\Hs^s\left(Q_1\cup Q_2\right)}\qqq \text{for }
    \begin{Bmatrix}
        m=1\\
        m=2
    \end{Bmatrix}.
    $$
    If $s\in \left(\frac{m+3}{2}, \infty\right)$, then we have
    $$
    \norm{\kappa_h \nabla\left(\pi_s w\right) - \kappa \nabla w} _{\LLs^2\left(S_h\right)}\le Ch \norm{w}_{\Hs^s\left(Q_1\cup Q_2\right)}.
    $$
    Assume that $\Gamma^\ast$ is a $\Cs^2$-continuous hypersurface in $\mathbb{R}^{m+1}$. For all $w\in \Hs^s\left(Q_1\cup Q_2\right)$ with $s\in \left[2,\infty\right)$, the function $\pi_s$ satisfies the estimate
    $$
    \norm{\Ds\left(\pi_s w- w\right)}_{\LLs^2\left(S_h\right)}\le C h \norm{w}_{\Hs^s\left(Q_1\cup Q_2\right)},
    $$
    where $\Ds:=\left(\nabla, \partial_t\right)^\top$ denotes the space-time gradient operator.
\end{lemma}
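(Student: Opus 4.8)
The plan is to localize all three estimates to the thin mismatch region $S_h$ and then invoke the three cited results of \cite{NLPT2024}, so the substance is the reduction and the bookkeeping of regularity thresholds. Since $\pi_s w$ and $w$ coincide outside $S_h$, every left-hand side is an integral over $S_h$ alone, and I would split $S_h=\paren{Q_1\setminus\overline{Q_{1,h}}}\cup\paren{Q_2\setminus\overline{Q_{2,h}}}$ and treat each piece separately. On $Q_1\setminus\overline{Q_{1,h}}\subset Q_1\cap Q_{2,h}$ one has $\kappa=\kappa_1$ and $w=\Es_1 w$, whereas $\kappa_h=\kappa_2$ and $\pi_s w=\Es_2 w$, so by \eqref{eq: extension operator}
$$
\kappa_h\nabla\paren{\pi_s w}-\kappa\nabla w=\kappa_2\nabla\paren{\Es_2 w}-\kappa_1\nabla\paren{\Es_1 w}\qquad\text{on }Q_1\setminus\overline{Q_{1,h}},
$$
and symmetrically on the other piece. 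Thus for the first two estimates it suffices to bound $\norm{\nabla\paren{\Es_i w}}_{\LLs^2(S_h)}$ $(i=1,2)$, and for the third to bound $\norm{\Ds\paren{\Es_2 w-\Es_1 w}}_{\LLs^2(S_h)}$; the stability in \eqref{eq: extension operator} then converts the $\Hs^s(Q_T)$-norms of the extensions into $\norm{w}_{\Hs^s(Q_1\cup Q_2)}$.

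For the first two estimates I would combine the geometric fact that $\abs{S_h}\leq Ch^2$ (the piecewise-linear $\Gamma_h^\ast$ interpolates the $\Cs^2$ surface $\Gamma^\ast$ at interface vertices, so the strip has width of order $h^2$) with the Sobolev embedding of $\nabla\paren{\Es_i w}\in\Hs^{s-1}(Q_T)$ in the space-time dimension $m+1$, followed by H\"older's inequality on the thin set $S_h$. This produces exactly three regimes keyed to the critical exponent $s=\tfrac{m+3}{2}$ (equivalently $s-1=\tfrac{m+1}{2}$): when $s>\tfrac{m+3}{2}$ the gradient embeds into $\Ls^\infty$, so $\norm{\nabla\paren{\Es_i w}}_{\LLs^2(S_h)}\leq\norm{\nabla\paren{\Es_i w}}_{\LLs^\infty}\abs{S_h}^{1/2}\leq Ch\norm{w}_{\Hs^s(Q_1\cup Q_2)}$, which is the second estimate; when $s<\tfrac{m+3}{2}$ one uses $\Hs^{s-1}(Q_T)\emb\Ls^q(Q_T)$ with $\tfrac12-\tfrac1q=\tfrac{s-1}{m+1}$ and $\abs{S_h}^{1/2-1/q}\leq Ch^{2(s-1)/(m+1)}$, whose worst (largest) value over $s\in[2,\tfrac{m+3}{2}]$ occurs at $s=2$ and equals $h^{2/(m+1)}=h^{2/3}$ for $m=2$; and in the borderline case $m=1$, $s=2$ (where $s-1=1=\tfrac{m+1}{2}$) the embedding $\Hs^1\paren{\R^2}\emb\Ls^q$ has constant growing like $\sqrt q$, and optimizing the H\"older exponent at $q\sim\abs{\log h}$ (concretely $q=4\abs{\log h}$) yields the logarithmic factor $h\sqrt{\abs{\log h}}$. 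These are the contents of \cite[Theorems~3.2 and 3.3]{NLPT2024}.

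For the third estimate the improvement to the clean rate $O(h)$ comes from using a refined trace estimate, available under the stronger $\Cs^2$-regularity of $\Gamma^\ast$, in place of the coarse H\"older bound above. I would parametrize $S_h$ by normal (tubular-neighborhood) coordinates $(\xi,r)$ with $\xi\in\Gamma^\ast$ and $\abs{r}\leq\delta(\xi)\leq Ch^2$, with Jacobian bounded above and below. Writing $g:=\Es_2 w-\Es_1 w\in\Hs^s(Q_T)$ with $s\geq2$, a one-dimensional fundamental-theorem-of-calculus estimate in $r$ gives $\int_0^{\delta}\abs{\Ds g}^2\di r\leq C\delta\abs{\tr_{\Gamma^\ast}\Ds g}^2+C\delta^2\int_0^{\delta}\abs{\pa_r\Ds g}^2\di r$; integrating over $\Gamma^\ast$ and using $\delta\leq Ch^2$, the boundedness of the trace $\Hs^{s-1}(Q_T)\to\Ls^2(\Gamma^\ast)$ (valid since $s-1>\tfrac12$, hence $s\geq2$), and $\norm{\Ds g}_{\Hs^1(Q_T)}\leq C\norm{w}_{\Hs^s(Q_1\cup Q_2)}$ yields $\norm{\Ds g}_{\LLs^2(S_h)}\leq Ch\norm{w}_{\Hs^s(Q_1\cup Q_2)}$ uniformly in $s$. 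This is \cite[Lemma~3.3]{NLPT2024}.

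The main obstacle is the borderline Sobolev embedding: at the critical exponent the map $\Hs^{(m+1)/2}\to\Ls^q$ fails to reach $\Ls^\infty$, and capturing the exact logarithmic loss for $m=1$ requires tracking the $\sqrt q$ growth of the embedding constant and balancing it against $\abs{S_h}^{1/2-1/q}$. The second delicate point, feeding the third estimate, is rigorously establishing the $O(h^2)$ width of $S_h$ together with the validity of the normal-coordinate parametrization, which is precisely where the $\Cs^2$ assumption is indispensable; once these are in hand, verifying that the thresholds $s\geq2$ and $s\lessgtr\tfrac{m+3}{2}$ line up across the three regimes is routine.
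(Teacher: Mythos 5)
Your proposal is correct and follows essentially the same route as the paper, which proves this lemma simply by citing \cite[Theorems~3.2, 3.3 and Lemma~3.3]{NLPT2024}; your reduction to the mismatch region via the extension operators, the $\abs{S_h}\le Ch^2$ bound combined with (critical and subcritical) Sobolev embeddings for the first two estimates, and the thin-strip/trace argument for the third accurately reconstruct the content of those cited results and match each to the correct estimate.
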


Next, we study the interpolation approximability. Consider $w\in \Hs^1\left(Q_T\right)\cap \Hs^s\left(Q_1\cup Q_2\right)$ with $s\in \left[2,\infty\right)$. It follows from \cite[Theorems~2.35 and 18.8]{Ern2021} that $w\in \Cs\left(\overline{Q_T}\right)$. Let $\mathcal{L}_h: \Cs\left(\overline{Q_T}\right)\rightarrow \Us_h$ be the Lagrange interpolation operator associated with $\Us_h$. For convenience, we define $e\left(w\right):= w - \mathcal{L}_h w$ and $e_{\pi}\left(w\right):= \pi_s w - \mathcal{L}_h w$. We have the following lemma:

\begin{lemma}
    \label{lem: interpolation error}
    For all $w\in \Hs^1\left(Q_T\right)\cap \Hs^s\left(Q_1\cup Q_2\right)$ with $s\in \left[2,\frac{m+3}{2}\right]$, the following estimate holds
    $$
    \norm{e\left(w\right)}_{\Ls^2\left(Q_T\right)} + h \norm{\Ds e\left(w\right)}_{\LLs^2\left(Q_T\right)} \le C 
    \begin{Bmatrix}
        h^2\sqrt{\abs{\log h}}\\
        h^{\frac{5}{3}}
    \end{Bmatrix}\norm{w}_{\Hs^s\left(Q_1\cup Q_2\right)}\ \text{for }
    \begin{Bmatrix}
        m=1\\
        m=2
    \end{Bmatrix}.
    $$
    If $s\in \left(\frac{m+3}{2}, \infty\right)$, then the estimate is optimal, i.e.,
    $$
    \norm{e\left(w\right)}_{\Ls^2\left(Q_T\right)} + h \norm{\Ds e\left(w\right)}_{\LLs^2\left(Q_T\right)} \le Ch^2\norm{w}_{\Hs^s\left(Q_1\cup Q_2\right)}.
    $$
    On the other hand, for all $w\in \Hs^1\left(Q_T\right)\cap \Hs^s\left(Q_1\cup Q_2\right)$ with $s\in \left[2,\infty\right)$, the operator $e_{\pi}$ satisfies the inequalities
    $$
    \norm{\Ds e_{\pi}\left(w\right)}_{\LLs^2\left(Q_{1,h}\cup Q_{2,h}\right)} \le C h \norm{w}_{\Hs^s\left(Q_1\cup Q_2\right)},
    $$
    and
    $$
    \vertiii{e_{\pi}\left(w\right)}_\ast \le C h \norm{w}_{\Hs^s\left(Q_1\cup Q_2\right)}.
    $$
\end{lemma}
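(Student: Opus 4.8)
The plan is to compare the Lagrange interpolant with the piecewise Stein extension $\pi_s w$ and to exploit the interface-fitted structure of the mesh. I would write the decomposition
$e(w) = \paren{w - \pi_s w} + e_\pi(w)$
and treat the two pieces separately, using that $w = \pi_s w$ on $Q_T \setminus S_h$, so that $e(w)$ and $e_\pi(w)$ differ only on the mismatch region $S_h$. The clean, $s$-independent bounds (the last two displays) come entirely from $e_\pi(w)$, while the whole $s$-dependence of the first two displays is carried by $w - \pi_s w$ on $S_h$.

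The key observation for $e_\pi(w)$ is an interface-fitted identity. Every vertex of every element $K \subset Q_{i,h}$ lies either in $\overline{Q_i}$ or on $\Gamma^\ast$, and on $\Gamma^\ast$ one has $\Es_1 w = \Es_2 w = w$ by continuity of $w$ across the interface; hence the nodal values of $w$ and of $\Es_i w$ coincide on $Q_{i,h}$, so $\mathcal{L}_h w = \mathcal{L}_h(\Es_i w)$ there. Consequently $e_\pi(w)_{\mid Q_{i,h}} = \Es_i w - \mathcal{L}_h(\Es_i w)$ is the ordinary $\mathbb{P}_1$ interpolation error of the globally $\Hs^s(Q_T)$ function $\Es_i w$. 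Standard interpolation estimates together with the stability bound $\norm{\Es_i w}_{\Hs^s(Q_T)} \le C\norm{w}_{\Hs^s(Q_i)}$ from \eqref{eq: extension operator} and $s\ge 2$ give $\norm{e_\pi(w)}_{\Ls^2(Q_T)} \le Ch^2\norm{w}_{\Hs^s(Q_1\cup Q_2)}$ and the third asserted inequality $\norm{\Ds e_\pi(w)}_{\LLs^2(Q_{1,h}\cup Q_{2,h})} \le Ch\norm{w}_{\Hs^s(Q_1\cup Q_2)}$. For the $\vertiii{\cdot}_\ast$ bound I would dominate $\vertiii{e_\pi(w)}$ by $\norm{\Ds e_\pi(w)}_{\LLs^2}$, and for the time-derivative part test \eqref{eq: discrete riesz representation} with $\zeta_h = z_h(\partial_t e_\pi(w))$; Cauchy--Schwarz and the Poincar\'e--Steklov inequality \eqref{eq: compare L2 and the W norm} then yield $\vertiii{z_h(\partial_t e_\pi(w))} \le C\norm{\partial_t e_\pi(w)}_{\Ls^2(Q_{1,h}\cup Q_{2,h})} \le Ch\norm{w}_{\Hs^s(Q_1\cup Q_2)}$, whence $\vertiii{e_\pi(w)}_\ast \le Ch\norm{w}_{\Hs^s(Q_1\cup Q_2)}$.

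It then remains to control $w - \pi_s w$ on $S_h$. Its space-time gradient is handled directly by the last estimate of \Cref{lem: properties of pi}, giving $\norm{\Ds(w-\pi_s w)}_{\LLs^2(S_h)} \le Ch\norm{w}_{\Hs^s(Q_1\cup Q_2)}$; combined with the $e_\pi$ gradient bound this already forces $h\norm{\Ds e(w)}_{\LLs^2(Q_T)} \le Ch^2\norm{w}_{\Hs^s(Q_1\cup Q_2)}$, so only the $\Ls^2$ term remains. To bound $\norm{w-\pi_s w}_{\Ls^2(S_h)}$ I would use that $w - \pi_s w$ vanishes on $\Gamma^\ast$ (there both $w$ and the relevant Stein extension reduce to the common trace of $w$) and that $S_h$ is a thin sliver attached to $\Gamma^\ast$; a Poincar\'e--Friedrichs inequality on $S_h$ converts the gradient estimates of \Cref{lem: properties of pi} into the claimed $\Ls^2$ rates. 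When $s > \frac{m+3}{2}$ the embedding $\Hs^s(\R^{m+1}) \hookrightarrow \Cs^1$ makes $w - \pi_s w$ pointwise small on $S_h$, which, together with the measure of $S_h$, recovers the optimal $h^2$ rate; for $s \in [2,\frac{m+3}{2}]$ this embedding fails, and one instead feeds the sharper weighted-gradient bounds of \Cref{lem: properties of pi} (carrying the $h\sqrt{\abs{\log h}}$ and $h^{2/3}$ factors for $m=1,2$) through the same Poincar\'e argument, producing the suboptimal $h^2\sqrt{\abs{\log h}}$ and $h^{5/3}$ factors. A final triangle inequality on $e(w) = \paren{w-\pi_s w} + e_\pi(w)$ assembles the two displays.

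The main obstacle is precisely this low-regularity $\Ls^2$ estimate on the mismatch region. For $s \in [2,\frac{m+3}{2}]$ the difference $w - \pi_s w$ is not $\Cs^1$, so one cannot simply Taylor-expand off $\Gamma^\ast$, and the correct rate must be extracted from the anisotropic geometry of the sliver $S_h$ together with the delicate trace estimates underlying \Cref{lem: properties of pi}. Once the interface-fitted identity $\mathcal{L}_h w = \mathcal{L}_h(\Es_i w)$ on $Q_{i,h}$ is in place, everything else is standard interpolation theory.
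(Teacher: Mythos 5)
Your treatment of the last two inequalities is sound and matches the paper: the interface-fitted identity $\mathcal{L}_h w = \mathcal{L}_h\left(\Es_i w\right)$ on $Q_{i,h}$ (the nodal values agree because every vertex of an element of $Q_{i,h}$ lies in $\overline{Q_i}$) reduces $e_{\pi}\left(w\right)$ to the ordinary interpolation error of the $\Hs^2$ function $\Es_i w$, which is essentially the content of the cited \cite[Lemma~3.4]{NLPT2024}, and your derivation of the $\vertiii{\cdot}_\ast$ bound by testing \eqref{eq: discrete riesz representation} with $z_h\left(\partial_t e_{\pi}\left(w\right)\right)$ is exactly the paper's argument. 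The second display (the case $s>\frac{m+3}{2}$) also goes through along your lines; the paper simply cites \cite[Lemma~5.2]{MHH2025} for it.

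The gap is in the first display. The paper obtains it from \cite[Lemma~2.1]{CZ1998} for $m=1$ and an adaptation of that argument for $m=2$: the exponents $h^2\sqrt{\smabs{\log h}}$ and $h^{5/3}$ come from bounding the interpolation error on the sliver pieces of the interface elements by H\"older's inequality with an exponent $q$, combined with the Sobolev embeddings $\smnorm{\cdot}_{\Ls^q\left(Q_T\right)}\le Cq^{1/2}\smnorm{\cdot}_{\Hs^1\left(Q_T\right)}$ for all $q<\infty$ (for $m=1$, then optimizing $q\sim\smabs{\log h}$) and $\smnorm{\cdot}_{\Ls^q\left(Q_T\right)}\le C\smnorm{\cdot}_{\Hs^1\left(Q_T\right)}$ for $q\le 6$ (for $m=2$, taking $q=6$), together with $\smabs{S_h}=\mathcal{O}\left(h^2\right)$ so that the factor $\smabs{S_h}^{1/2-1/q}=h^{1-2/q}$ produces $h\sqrt{\smabs{\log h}}$ and $h^{2/3}$ for the gradient. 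Your proposal never carries out this computation, and your two suggested mechanisms for the low-regularity case are mutually inconsistent. On the one hand, you claim the gradient term is already $\mathcal{O}\left(h\right)$ for every $s\ge 2$ via the third estimate of \Cref{lem: properties of pi}, and your Poincar\'e argument on the $\mathcal{O}\left(h^2\right)$-thin sliver would then give $\smnorm{w-\pi_s w}_{\Ls^2\left(S_h\right)}\le Ch^3\smnorm{w}_{\Hs^s\left(Q_1\cup Q_2\right)}$, i.e.\ the fully optimal rate for all $s\ge 2$, which would render the case distinction in the lemma vacuous and should have made you suspicious. On the other hand, you propose to feed the first (weighted-gradient) estimate of \Cref{lem: properties of pi} through the same Poincar\'e argument; but that estimate controls $\kappa_h\nabla\left(\pi_s w\right)-\kappa\nabla w$, not $\nabla\left(w-\pi_s w\right)$, so it cannot be inserted into a Poincar\'e inequality for $w-\pi_s w$, and even formally the bookkeeping gives $h^3\sqrt{\smabs{\log h}}$ rather than the claimed $h^2\sqrt{\smabs{\log h}}$. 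You correctly flag the low-regularity sliver estimate as the main obstacle, but you do not resolve it; the resolution is the H\"older--Sobolev optimization above, not a Poincar\'e inequality on $S_h$.
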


\begin{proof}
    For the proof of the first inequality when $m=1$, please refer to \cite[Lemma~2.1]{CZ1998}. This result fundamentally relies on the Sobolev inequality
    $$
    \norm{w}_{\Ls^q\left(Q_T\right)}\le Cq^{\frac{1}{2}}\norm{w}_{\Hs^1\left(Q_T\right)}\qqqq \forall w\in \Hs^1\left(Q_T\right), \forall q\in \left[2,\infty\right),
    $$
    where the constant $C>0$ is independent of $q$. For the case of two spatial dimensions $\left(m=2\right)$, we employ analogous techniques to establish the corresponding interpolation error bound, except replacing the above inequality with the following one
    $$
    \norm{w}_{\Ls^q\left(Q_T\right)}\le C\norm{w}_{\Hs^1\left(Q_T\right)}\qqqq \forall w\in \Hs^1\left(Q_T\right), \forall q\in \left[1,6\right].
    $$
    
    The second estimate is proved in \cite[Lemma~5.2]{MHH2025}. The third inequality is a direct extension of \cite[Lemma~3.4]{NLPT2024}. 
    
    Now, we prove the last one. By using \eqref{eq: discrete riesz representation} and the third inequality, we have
    \begin{align*}
        \vertiii{e_{\pi}\left(w\right)}^2_\ast &= \vertiii{e_{\pi}\left(w\right)}^2 + \vertiii{z_h\left(\partial_t\left( e_{\pi}\left(w\right)\right)\right)}^2 \\
        & \le C\left(\norm{\nabla e_{\pi}\left(w\right)}^2_{\LLs^2\left(Q_{1,h}\cup Q_{2,h}\right)} + \norm{\partial_t \left(e_{\pi}\left(w\right)\right)}^2_{\Ls^2\left(Q_{1,h}\cup Q_{2,h}\right)}\right)\\
        & \le C\norm{\Ds e_{\pi}\left(w\right)}^2_{\LLs^2\left(Q_{1,h}\cup Q_{2,h}\right)} \le C h^2\norm{w}^2_{\Hs^s\left(Q_1\cup Q_2\right)},
    \end{align*}
    which gives us the desired result.
\end{proof}

We are now in a position to establish the main result of this section. Due to the favorable properties of $\vertiii{\cdot}$, $\vertiii{\cdot}_{\ast}$, and $\pi_s$ for handling discrete problems, it is reasonable to choose estimating the error $\vertiii{\pi_s \overline{u} - \overline{u}_h}^2_\ast + \vertiii{\pi_s \overline{p} - \overline{p}_h}^2$. We have the following theorem: 

\begin{theorem}
    \label{theo: error estimate}
    Let $\left(\overline{u},\overline{p}\right)\in \Us_0\times \Us_T$ and $\left(\overline{u}_h,\overline{p}_h\right)\in \Us_h\times \Ws_h$ be the solutions to Problems \eqref{eq: coupled state-adjoint} and \eqref{eq: discrete coupled state-adjoint}, respectively. If $\overline{u}, \overline{p}\in \Hs^1\left(Q_T\right)\cap \Hs^s\left(Q_1\cup Q_2\right)$ with $s\in \left[2, \frac{m+3}{2}\right]$, then there exist the constants $C>0$ and $h^\dagger >0$ such that for all $h\in \left(0, h^\dagger\right)$, we have the following estimate
    \begin{align*}
        & \sqrt{\vertiii{\pi_s \overline{u} - \overline{u}_h}^2_\ast + \vertiii{\pi_s \overline{p} - \overline{p}_h}^2} \le \\
        & \q \le C 
    \begin{Bmatrix}
        h\sqrt{\abs{\log h}}\\
        h^{\frac{2}{3}}
    \end{Bmatrix}\left(\norm{\overline{u}}_{\Hs^s\left(Q_1\cup Q_2\right)} + \norm{\overline{p}}_{\Hs^s\left(Q_1\cup Q_2\right)}\right)\qqq \text{for }
    \begin{Bmatrix}
        m=1\\
        m=2
    \end{Bmatrix}.
    \end{align*}
    If $s\in \left(\frac{m+3}{2}, \infty\right)$, then the following inequality holds
    $$
    \sqrt{\vertiii{\pi_s \overline{u} - \overline{u}_h}^2_\ast + \vertiii{\pi_s \overline{p} - \overline{p}_h}^2} \le Ch \left(\norm{\overline{u}}_{\Hs^s\left(Q_1\cup Q_2\right)} + \norm{\overline{p}}_{\Hs^s\left(Q_1\cup Q_2\right)}\right).
    $$
\end{theorem}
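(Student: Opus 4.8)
The plan is to combine the discrete inf--sup stability of \Cref{lem: discrete bnb1} with a Strang-type consistency argument, in which the geometric mismatch between $\Gamma^\ast$ and $\Gamma_h^\ast$ is transferred onto the operator $\pi_s$ and then controlled by \Cref{lem: properties of pi}. We first split the error using the interpolants $\mathcal{L}_h\overline{u}\in \Us_h$ and $I_h\overline{p}\in\Ws_h$, where $I_h$ denotes the Lagrange interpolation onto $\Ws_h$ (for which the estimates of \Cref{lem: interpolation error} hold verbatim, since no temporal boundary condition enters their proof). Writing $\pi_s\overline{u}-\overline{u}_h = e_{\pi}(\overline{u}) + \xi_h$ with $\xi_h := \mathcal{L}_h\overline{u}-\overline{u}_h\in\Us_h$, and $\pi_s\overline{p}-\overline{p}_h = e_{\pi}(\overline{p}) + \theta_h$ with $\theta_h := I_h\overline{p}-\overline{p}_h\in\Ws_h$, the triangle inequality together with the interpolation bounds $\vertiii{e_{\pi}(\overline{u})}_\ast\le Ch\norm{\overline{u}}_{\Hs^s(Q_1\cup Q_2)}$ and $\vertiii{e_{\pi}(\overline{p})}\le Ch\norm{\overline{p}}_{\Hs^s(Q_1\cup Q_2)}$ from \Cref{lem: interpolation error} reduces the task to estimating the discrete error $\sqrt{\vertiii{\xi_h}^2_\ast+\vertiii{\theta_h}^2}$.

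Next, we would apply \Cref{lem: discrete bnb1} to the discrete pair $(\xi_h,\theta_h)\in\Us_h\times\Ws_h$, so that it suffices to bound the residual $\mathcal{A}_h((\xi_h,\theta_h),(\vphi_h,\phi_h))$ uniformly over the test functions. Because $(\vphi_h,\phi_h)\in\Ws_h\times\Us_h\subset\Ws\times\Us_0$, the continuous problem \eqref{eq: coupled state-adjoint} gives $\int_{Q_T}u_d\phi_h\dx\dt = \mathcal{A}((\overline{u},\overline{p}),(\vphi_h,\phi_h))$, while the discrete problem \eqref{eq: discrete coupled state-adjoint} gives the same right-hand side for $\mathcal{A}_h((\overline{u}_h,\overline{p}_h),(\vphi_h,\phi_h))$. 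Subtracting yields the error identity
\[
\mathcal{A}_h((\xi_h,\theta_h),(\vphi_h,\phi_h)) = \mathcal{A}_h((\mathcal{L}_h\overline{u},I_h\overline{p}),(\vphi_h,\phi_h)) - \mathcal{A}((\overline{u},\overline{p}),(\vphi_h,\phi_h)).
\]

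We then insert $\pi_s$ and decompose the residual as
\[
\mathcal{A}_h((\mathcal{L}_h\overline{u}-\pi_s\overline{u}, I_h\overline{p}-\pi_s\overline{p}),(\vphi_h,\phi_h)) + \bigl[\mathcal{A}_h((\pi_s\overline{u},\pi_s\overline{p}),(\vphi_h,\phi_h)) - \mathcal{A}((\overline{u},\overline{p}),(\vphi_h,\phi_h))\bigr].
\]
The first summand equals $-\mathcal{A}_h((e_{\pi}(\overline{u}),e_{\pi}(\overline{p})),(\vphi_h,\phi_h))$; bounding it by the boundedness of $\mathcal{A}_h$ with respect to $\vertiii{\cdot}_\ast$ and $\vertiii{\cdot}$ (the only delicate ingredient being the time-derivative term, which we pair through \eqref{eq: discrete riesz representation} as $\sum_i\int_{Q_{i,h}}\partial_t(\cdot)\,\vphi_h = \int_{Q_T}\kappa_h\nabla z_h(\partial_t(\cdot))\cdot\nabla\vphi_h$) together with \Cref{lem: interpolation error} produces an $O(h)$ contribution. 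For the second summand we use that $\pi_s\overline{u}=\overline{u}$, $\pi_s\overline{p}=\overline{p}$, and $\kappa_h=\kappa$ away from the thin region $S_h$, so that every term localizes to $S_h$: the diffusion contributions are controlled directly by $\norm{\kappa_h\nabla(\pi_s w)-\kappa\nabla w}_{\LLs^2(S_h)}$, while the advection and time-derivative contributions reduce, after the integration-by-parts argument of \eqref{eq: optimality conditions 5}, to $\norm{\Ds(\pi_s w - w)}_{\LLs^2(S_h)}$ tested against $\vphi_h$ or $\phi_h$, which we absorb using the Poincar\'e--Steklov inequality \eqref{eq: compare L2 and the W norm}. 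Invoking \Cref{lem: properties of pi} then yields the factor $h\sqrt{\abs{\log h}}$ (for $m=1$) or $h^{2/3}$ (for $m=2$) in the regime $s\in[2,\frac{m+3}{2}]$, which dominates the $O(h)$ interpolation term. Dividing by $\sqrt{\vertiii{\vphi_h}^2+\vertiii{\phi_h}^2_\ast}$, taking the supremum, and recombining with the interpolation errors delivers the stated bound; the case $s\in(\frac{m+3}{2},\infty)$ is identical, now using the optimal $O(h)$ estimate of \Cref{lem: properties of pi}.

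The main obstacle will be the consistency summand over $S_h$. One must verify that the non-symmetric time-derivative and advection parts of both $a_h(\cdot,\vphi_h)$ (the state block) and $a_h(\phi_h,\cdot)$ (the adjoint block) genuinely collapse onto $S_h$ after integration by parts, and that the piecewise space-time derivatives of $\pi_s\overline{u}$ and $\pi_s\overline{p}$, which may jump across $\Gamma_h^\ast$, are consistent with the volume integrals defining $\mathcal{A}_h$. The sharp but suboptimal rate enters solely through the diffusion term, so the entire estimate is ultimately governed by \Cref{lem: properties of pi}.
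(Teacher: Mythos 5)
Your overall architecture coincides with the paper's: triangle inequality through the interpolant, the discrete inf--sup condition of \Cref{lem: discrete bnb1} applied to the discrete error, the Galerkin-orthogonality identity, a splitting into an interpolation part (controlled by \Cref{lem: interpolation error}) and a geometric-consistency part localized to $S_h$ (controlled by \Cref{lem: properties of pi}). That is exactly the paper's $I_1-I_2$ decomposition up to relabeling. However, there is one concrete gap in how you treat the adjoint block $-a_h(\phi_h,\cdot)$, and it is not a cosmetic one.

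In that block the time derivative sits on the \emph{test} function: the terms you must bound are $-\inprod{\partial_t\phi_h,\,\pi_s\overline{p}-\overline{p}}$ and $-\inprod{\partial_t\phi_h,\,e_{\pi}(\overline{p})}$. Your $z_h$-pairing trick via \eqref{eq: discrete riesz representation} does not apply here, because the functions multiplying $\partial_t\phi_h$ do not belong to $\Ws_h$, and $\partial_t\phi_h$ is not controlled in $\Ls^2$ by $\vertiii{\phi_h}_\ast$ (only its discrete Riesz lift is). The only way out — the one the paper takes — is to integrate by parts in time so that $\partial_t$ lands on $\overline{p}$, $\pi_s\overline{p}$, and $\mathcal{L}_h\overline{p}$. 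For the continuous form this is free since $\overline{p}\in\Us_T$, but for the interpolant it generates the temporal boundary term $\int_{\Omega}\left(\mathcal{L}_h\overline{p}\right)\left(\xb,T\right)\phi_h\left(\xb,T\right)\dx$, which does not vanish and must be estimated separately: the paper uses $\overline{p}\left(\cdot,T\right)=0$ to replace $\mathcal{L}_h\overline{p}\left(\cdot,T\right)$ by $-e\left(\overline{p}\right)\left(\cdot,T\right)$, then invokes the multiplicative trace inequality on the elements touching $\Omega\times\{T\}$ together with \Cref{lem: interpolation error}, and the discrete trace inequality $\norm{\phi_h\left(\cdot,T\right)}_{\Ls^2\left(\Omega\right)}\le Ch^{-1/2}\norm{\phi_h}_{\Ls^2\left(Q_T\right)}$. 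This boundary term contributes exactly at the critical rate $h\sqrt{\smabs{\log h}}$ (resp.\ $h^{2/3}$), so your closing remark that the suboptimal rate "enters solely through the diffusion term" is also inaccurate: the $t=T$ trace term is equally rate-determining. Your proposal, which mentions integration by parts only as reducing the advection and time-derivative contributions to $\norm{\Ds\left(\pi_s w-w\right)}_{\LLs^2\left(S_h\right)}$, silently discards this boundary contribution; as written, the consistency estimate does not close. The rest of the argument (choice of $I_h$ versus $\mathcal{L}_h$, the treatment of the state block, the role of \Cref{lem: properties of pi}) is sound and matches the paper.
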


\begin{proof}
    Using the triangle inequality and \Cref{lem: interpolation error}, we have
    \begin{align*}
        & \vertiii{\pi_s \overline{u} - \overline{u}_h}^2_\ast + \vertiii{\pi_s \overline{p} - \overline{p}_h}^2 \le\\
        & \q \le 2\left(\vertiii{e_{\pi}\left(\overline{u}\right)}^2_\ast + \vertiii{\mathcal{L}_h\overline{u} - \overline{u}_h}^2_\ast\right) + 2\left(\vertiii{e_{\pi}\left(\overline{p}\right)}^2 + \vertiii{\mathcal{L}_h\overline{p} - \overline{p}_h}^2\right)\\
        & \q \le Ch^2\norm{\overline{u}}^2_{\Hs^s\left(Q_1\cup Q_2\right)} + 2\vertiii{\mathcal{L}_h\overline{u} - \overline{u}_h}^2_\ast +  Ch^2\norm{\overline{p}}^2_{\Hs^s\left(Q_1\cup Q_2\right)} + 2\vertiii{\mathcal{L}_h\overline{p} - \overline{p}_h}^2.
    \end{align*}
    Due to \Cref{lem: discrete bnb1}, the following estimate holds
    \begin{align*}
        & C\sqrt{\vertiii{\mathcal{L}_h\overline{u} - \overline{u}_h}^2_\ast + \vertiii{\mathcal{L}_h\overline{p} - \overline{p}_h}^2} \le \\
        & \q \le \sup_{\left(\vphi_h, \phi_h\right) \in \Ws_h\times \Us_h\setminus \left\{\left(0_{\Ws_h},0_{\Us_h}\right)\right\}} \dfrac{\mathcal{A}_h\left(\left(\mathcal{L}_h\overline{u} - \overline{u}_h, \mathcal{L}_h\overline{p} - \overline{p}_h\right), \left(\vphi_h, \phi_h\right)\right)}{\sqrt{\vertiii{\vphi_h}^2 + \vertiii{\phi_h}^2_\ast}}\\
        & \q = \sup_{\left(\vphi_h, \phi_h\right) \in \Ws_h\times \Us_h\setminus \left\{\left(0_{\Ws_h},0_{\Us_h}\right)\right\}} \dfrac{\mathcal{A}_h\left(\left(\mathcal{L}_h\overline{u}, \mathcal{L}_h\overline{p}\right), \left(\vphi_h, \phi_h\right)\right) - \mathcal{A}\left(\left(\overline{u}, \overline{p}\right), \left(\vphi_h, \phi_h\right)\right)}{\sqrt{\vertiii{\vphi_h}^2 + \vertiii{\phi_h}^2_\ast}}\\
        & \q = \sup_{\left(\vphi_h, \phi_h\right) \in \Ws_h\times \Us_h\setminus \left\{\left(0_{\Ws_h},0_{\Us_h}\right)\right\}} \dfrac{I_1 - I_2}{\sqrt{\vertiii{\vphi_h}^2 + \vertiii{\phi_h}^2_\ast}},
    \end{align*}
    where $I_1$ and $I_2$ are defined as 
    \begin{align*}
        I_1 &:= \sum_{i=1}^2\int\limits_{Q_{i,h}} \partial_t \left(\Es_i \overline{u}\right) \vphi_h + \left[\vb\cdot \nabla \left(\Es_i \overline{u}\right)\right]\vphi_h +  \kappa_i \nabla \left(\Es_i \overline{u}\right) \cdot \nabla \vphi_h \dx \dt\\
        &\q +\dfrac{1}{\eta} \sum_{i=1}^2\int\limits_{Q_{i,h}}\kappa_i \nabla \left(\Es_i \overline{p}\right) \cdot \nabla \vphi_h \dx \dt + \int\limits_{Q_T}\overline{u}\phi_h\dx\dt - \mathcal{A}\left(\left(\overline{u}, \overline{p}\right), \left(\vphi_h, \phi_h\right)\right)\\
        &\q - \sum_{i=1}^2\int\limits_{Q_{i,h}} -\partial_t \left(\Es_i \overline{p}\right) \phi_h - \left[\vb\cdot \nabla \left(\Es_i \overline{p}\right)\right]\phi_h +  \kappa_i \nabla \left(\Es_i \overline{p}\right) \cdot \nabla \phi_h \dx \dt,
    \end{align*}
    and
    \begin{align*}
        I_2 &:= \sum_{i=1}^2\int\limits_{Q_{i,h}} \partial_t \left(\Es_i \overline{u}\right) \vphi_h + \left[\vb\cdot \nabla \left(\Es_i \overline{u}\right)\right]\vphi_h +  \kappa_i \nabla \left(\Es_i \overline{u}\right) \cdot \nabla \vphi_h \dx \dt\\
        &\q +\dfrac{1}{\eta} \sum_{i=1}^2\int\limits_{Q_{i,h}}\kappa_i \nabla \left(\Es_i \overline{p}\right) \cdot \nabla \vphi_h \dx \dt + \int\limits_{Q_T}\overline{u}\phi_h\dx\dt - \mathcal{A}_h\left(\left(\mathcal{L}_h\overline{u}, \mathcal{L}_h\overline{p}\right), \left(\vphi_h, \phi_h\right)\right)\\
        &\q - \sum_{i=1}^2\int\limits_{Q_{i,h}} -\partial_t \left(\Es_i \overline{p}\right) \phi_h - \left[\vb\cdot \nabla \left(\Es_i \overline{p}\right)\right]\phi_h +  \kappa_i \nabla \left(\Es_i \overline{p}\right) \cdot \nabla \phi_h \dx \dt.
    \end{align*}
    
    We first consider the case where $\overline{u}, \overline{p}\in \Hs^1\left(Q_T\right)\cap \Hs^s\left(Q_1\cup Q_2\right)$ for some $s\in \left[2,\frac{m+3}{2}\right]$. By applying the arguments for \eqref{eq: relation between adjoint and state}, we have $a\left(\phi_h, \overline{p}\right) = a^\prime\left(\overline{p},\phi_h\right)$ for all $\phi_h \in \Us_h$, which allows us to rewrite $I_1$ as follows
    \begin{align*}
        I_1 &= \sum_{i=1}^2\int\limits_{Q_{i,h}} \partial_t \left(\Es_i \overline{u}\right) \vphi_h + \left[\vb\cdot \nabla \left(\Es_i \overline{u}\right)\right]\vphi_h +  \kappa_i \nabla \left(\Es_i \overline{u}\right) \cdot \nabla \vphi_h \dx \dt - a\left(\overline{u}, \vphi_h\right)\\
        & \q +\dfrac{1}{\eta}\left(\sum_{i=1}^2\int\limits_{Q_{i,h}}\kappa_i \nabla \left(\Es_i \overline{p}\right) \cdot \nabla \vphi_h \dx \dt - \int\limits_{Q_T}\kappa \nabla \overline{p}\cdot \nabla \vphi_h \dx \dt\right)\\
        & \q - \sum_{i=1}^2\int\limits_{Q_{i,h}} -\partial_t \left(\Es_i \overline{p}\right) \phi_h - \left[\vb\cdot \nabla \left(\Es_i \overline{p}\right)\right]\phi_h +  \kappa_i \nabla \left(\Es_i \overline{p}\right) \cdot \nabla \phi_h \dx \dt + a^\prime\left(\overline{p},\phi_h\right)\\
        & = \int\limits_{S_h} \partial_t\left(\pi_s \overline{u} - \overline{u}\right)\vphi_h + \left[\vb\cdot \nabla \left(\pi_s \overline{u} - \overline{u}\right)\right]\vphi_h +  \left[\kappa_h \nabla \left(\pi_s \overline{u}\right) - \kappa\nabla \overline{u}\right] \cdot \nabla \vphi_h \dx \dt\\
        & \q +\dfrac{1}{\eta} \int\limits_{S_h} \left[\kappa_h \nabla \left(\pi_s \overline{p}\right) - \kappa\nabla \overline{p}\right] \cdot \nabla \vphi_h \dx \dt\\
        & \q - \int\limits_{S_h} -\partial_t\left(\pi_s \overline{p} - \overline{p}\right)\phi_h - \left[\vb\cdot \nabla \left(\pi_s \overline{p} - \overline{p}\right)\right]\phi_h +  \left[\kappa_h \nabla \left(\pi_s \overline{p}\right) - \kappa\nabla \overline{p}\right] \cdot \nabla \phi_h \dx \dt.
    \end{align*}
    Similarly, we integrate by parts with $\phi_h\in \Us_h$ and $\mathcal{L}_h\overline{p}\in \Us_h$, then utilize the technique in \eqref{eq: optimality conditions 5} to obtain
    \begin{align*}
        a^\prime_h\left(\phi_h, \mathcal{L}_h  \overline{p}\right) &=\int\limits_{\Omega}\left(\mathcal{L}_h\overline{p}\right)\left(\xb, T\right)\phi_h\left(\xb, T\right)\dx\\
        & \q + \int\limits_{Q_T} -\partial_t \left(\mathcal{L}_h\overline{p}\right) \phi_h - \left[\vb\cdot \nabla \left(\mathcal{L}_h\overline{p}\right)\right]\phi_h +  \kappa_i \nabla \left(\mathcal{L}_h\overline{p}\right) \cdot \nabla \phi_h \dx \dt.
    \end{align*}
    Therefore, we rewrite $I_2$ into the following form
    \begin{align*}
        I_2 &= \sum_{i=1}^2\int\limits_{Q_{i,h}} \partial_t \left(e_{\pi}\left(\overline{u}\right)\right) \vphi_h + \left[\vb\cdot \nabla e_{\pi}\left(\overline{u}\right)\right]\vphi_h +  \kappa_i \nabla e_{\pi}\left(\overline{u}\right) \cdot \nabla \vphi_h \dx \dt\\
        & \q +\dfrac{1}{\eta} \sum_{i=1}^2\int\limits_{Q_{i,h}}\kappa_i \nabla e_{\pi}\left(\overline{p}\right) \cdot \nabla \vphi_h \dx \dt + \int\limits_{Q_T}e \left(\overline{u}\right) \phi_h\dx\dt\\
        & \q - \sum_{i=1}^2\int\limits_{Q_{i,h}} -\partial_t \left(\Es_i \overline{p}\right) \phi_h - \left[\vb\cdot \nabla \left(\Es_i \overline{p}\right)\right]\phi_h +  \kappa_i \nabla \left(\Es_i \overline{p}\right) \cdot \nabla \phi_h \dx \dt \\
        & \q + \int\limits_{\Omega}\left(\mathcal{L}_h\overline{p}\right)\left(\xb, T\right)\phi_h\left(\xb, T\right)\dx\\
        & \q + \int\limits_{Q_T} -\partial_t \left(\mathcal{L}_h\overline{p}\right) \phi_h - \left[\vb\cdot \nabla \left(\mathcal{L}_h\overline{p}\right)\right]\phi_h +  \kappa_i \nabla \left(\mathcal{L}_h\overline{p}\right) \cdot \nabla \phi_h \dx \dt\\
        & = \sum_{i=1}^2\int\limits_{Q_{i,h}} \partial_t \left(e_{\pi}\left(\overline{u}\right)\right)\vphi_h + \left[\vb\cdot \nabla e_{\pi}\left(\overline{u}\right)\right]\vphi_h +  \kappa_i \nabla e_{\pi}\left(\overline{u}\right) \cdot \nabla \vphi_h \dx \dt\\
        & \q +\dfrac{1}{\eta} \sum_{i=1}^2\int\limits_{Q_{i,h}}\kappa_i \nabla e_{\pi}\left(\overline{p}\right) \cdot \nabla \vphi_h \dx \dt + \int\limits_{Q_T}e\left(\overline{u}\right)\phi_h\dx\dt\\
        & \q - \sum_{i=1}^2\int\limits_{Q_{i,h}} -\partial_t \left(e_{\pi}\left(\overline{p}\right)\right) \phi_h - \left[\vb\cdot \nabla e_{\pi}\left(\overline{p}\right)\right]\phi_h +  \kappa_i \nabla e_{\pi}\left(\overline{p}\right) \cdot \nabla \phi_h \dx \dt \\
        & \q + \int\limits_{\Omega}\left(\mathcal{L}_h\overline{p}\right)\left(\xb, T\right)\phi_h\left(\xb, T\right)\dx.
    \end{align*}
    By employing \eqref{eq: compare L2 and the W norm}, together with \Cref{lem: properties of pi,lem: interpolation error}, we hence imply
    \begin{align*}
        I_1 - I_2&\le C\left\{\norm{\Ds\left(\pi_s\overline{u}-\overline{u}\right)}_{\LLs^2\left(S_h\right)} + \norm{\Ds\left(\pi_s\overline{p}-\overline{p}\right)}_{\LLs^2\left(S_h\right)} + \norm{\kappa_h \nabla\left(\pi_s \overline{u}\right) - \kappa \nabla \overline{u}}_{\LLs^2\left(S_h\right)}\right.\\
        &\q +\norm{\kappa_h \nabla\left(\pi_s \overline{p}\right) - \kappa \nabla \overline{p}}_{\LLs^2\left(S_h\right)} + \norm{e\left(\overline{u}\right)}_{\Ls^2\left(Q_T\right)} + \norm{\Ds e_{\pi}\left(\overline{u}\right)}_{\LLs^2\left(Q_{1,h}\cup Q_{2,h}\right)}\\
        & \q + \left. \norm{\Ds e_{\pi}\left(\overline{p}\right)}_{\LLs^2\left(Q_{1,h}\cup Q_{2,h}\right)}\right\}\left(\norm{\nabla\vphi_h}_{\LLs^2\left(Q_T\right)} + \norm{\nabla\phi_h}_{\LLs^2\left(Q_T\right)}\right) \\
        & \q - \int\limits_{\Omega} \left(\mathcal{L}_h\overline{p}\right)\left(\xb, T\right)\phi_h\left(\xb, T\right)\dx\\
        & \le C\begin{Bmatrix}
            h\sqrt{\abs{\log h}}\\
            h^{\frac{2}{3}}
        \end{Bmatrix}\left(\norm{\overline{u}}_{\Hs^s\left(Q_1\cup Q_2\right)} + \norm{\overline{p}}_{\Hs^s\left(Q_1\cup Q_2\right)}\right)\left(\vertiii{\vphi_h} + \vertiii{\phi_h}_\ast\right)\\
        & \q - \int\limits_{\Omega} \left(\mathcal{L}_h\overline{p}\right)\left(\xb, T\right)\phi_h\left(\xb, T\right)\dx \qqqq \qqqq \q \text{for }
        \begin{Bmatrix}
            m=1\\
            m=2
        \end{Bmatrix}.
    \end{align*}
    Now, we proceed with the last term on the right-hand side of this inequality. Let $\mathcal{T}_{h, T}$ denote the set of all elements intersecting the upper boundary $\Omega\times \left\{T\right\}$ of $Q_T$. On the one hand, by the fact that $\overline{p}\in \Us_T$, the multiplicative trace inequality \cite[Lemma~12.15]{Ern2021}, and \Cref{lem: interpolation error}, we have
    \begin{align*}
        \norm{\mathcal{L}_h\left(\overline{p}\right)\left(\cdot, T\right)}^2_{\Ls^2\left(\Omega\right)} & = \norm{e\left(\overline{p}\right)\left(\cdot, T\right)}^2_{\Ls^2\left(\Omega\right)} \\
        & \le \sum_{K\in \mathcal{T}_{h, T}}\norm{\gamma_K \left(e\left(\overline{p}\right)\right)}^2_{\Ls^2\left(\partial K\right)}\\
        & \le C\sum_{K\in \mathcal{T}_{h, T}} \norm{e\left( \overline{p}\right)}_{\Ls^2\left(K\right)}\left(h^{-1} \norm{e\left(\overline{p}\right)}_{\Ls^2\left(K\right)} + \norm{\Ds e\left(\overline{p}\right)}_{\LLs^2\left(K\right)}\right)\\
        & \le C 
        \begin{Bmatrix}
            h^3\abs{\log h}\\
            h^{\frac{7}{3}}
        \end{Bmatrix}\norm{\overline{p}}^2_{\Hs^s\left(Q_1\cup Q_2\right)}\qqqq \qq \text{for }
        \begin{Bmatrix}
            m=1\\
            m=2
        \end{Bmatrix},
    \end{align*}
    where $\gamma_K: \Hs^1\left(K\right)\rightarrow \Ls^2\left(\partial K\right)$ denotes the trace operator defined on $K$. On the other hand, it follows from the discrete trace inequality \cite[Lemma~12.8]{Ern2021} that
    \begin{align*}
        \norm{\phi_h\left(\cdot, T\right)}^2_{\Ls^2\left(\Omega\right)}&\le \sum_{K\in \mathcal{T}_{h, T}}\norm{\gamma_K \left(\phi_h\right)}^2_{\Ls^2\left(\partial K\right)}\\
        & \le C \sum_{K\in \mathcal{T}_{h, T}} h^{-1} \norm{\phi_h}^2_{\Ls^2\left(K\right)} \le C h^{-1}\norm{\phi_h}^2_{\Ls^2\left(Q_T\right)}.
    \end{align*}
    By combining the last two inequalities with \eqref{eq: compare L2 and the W norm}, we thus obtain 
    $$
    \abs{\int\limits_{\Omega}\mathcal{L}_h\left(\overline{p}\right)\left(\xb, T\right)\phi_h\left(\xb, T\right)\dx} \le C 
    \begin{Bmatrix}
        h\sqrt{\abs{\log h}}\\
        h^{\frac{2}{3}}
    \end{Bmatrix}\norm{\overline{p}}_{\Hs^s\left(Q_1\cup Q_2\right)}\vertiii{\phi_h}_\ast\q\text{for }
    \begin{Bmatrix}
        m=1\\
        m=2
    \end{Bmatrix}.
    $$
    Therefore, we imply that 
    \begin{align*}
        I_1 -I_2 &\le C 
        \begin{Bmatrix}
            h\sqrt{\abs{\log h}}\\
            h^{\frac{2}{3}}
        \end{Bmatrix}\left(\norm{\overline{u}}_{\Hs^s\left(Q_1\cup Q_2\right)} + \norm{\overline{p}}_{\Hs^s\left(Q_1\cup Q_2\right)}\right)\left(\vertiii{\vphi_h} + \vertiii{\phi_h}_\ast\right)\\
    & \le C 
        \begin{Bmatrix}
            h\sqrt{\abs{\log h}}\\
            h^{\frac{2}{3}}
        \end{Bmatrix}\left(\norm{\overline{u}}_{\Hs^s\left(Q_1\cup Q_2\right)} + \norm{\overline{p}}_{\Hs^s\left(Q_1\cup Q_2\right)}\right)\sqrt{\vertiii{\vphi_h}^2 + \vertiii{\phi_h}^2_\ast},
    \end{align*}
    which leads to
    $$
    \sqrt{\vertiii{\mathcal{L}_h\overline{u} - \overline{u}_h}^2_\ast + \vertiii{\mathcal{L}_h\overline{p} - \overline{p}_h}^2}\le C 
        \begin{Bmatrix}
            h\sqrt{\abs{\log h}}\\
            h^{\frac{2}{3}}
        \end{Bmatrix}\left(\norm{\overline{u}}_{\Hs^s\left(Q_1\cup Q_2\right)} + \norm{\overline{p}}_{\Hs^s\left(Q_1\cup Q_2\right)}\right).
    $$
    This inequality, along with all previous estimates which derived from the first estimate in \Cref{lem: properties of pi} and the first estimate in \Cref{lem: interpolation error}, holds for sufficiently small $h>0$. The conclusion follows. 

    In the case where $\overline{u}, \overline{p}\in \Hs^1\left(Q_T\right)\cap \Hs^s\left(Q_1\cup Q_2\right)$ with $s\in \left(\frac{m+3}{2}, \infty\right)$, the argument proceeds similarly to yield the desired result. However, in this setting, we invoke the corresponding estimates from \Cref{lem: properties of pi,lem: interpolation error} that apply to functions with higher local regularity.
\end{proof}

Finally, we note that the error estimates in \Cref{theo: error estimate} can also be derived in alternative forms. For instance, by combining \Cref{lem: properties of pi,theo: error estimate}, we obtain the following result:

\begin{corollary}
    \label{cor: usual norm error estimate}
    If $\overline{u}, \overline{p}\in \Hs^1\left(Q_T\right)\cap \Hs^s\left(Q_1\cup Q_2\right)$ for some $s\in \left[2, \frac{m+3}{2}\right]$, then we can find $h^\dagger >0$ such that for all $h\in \left(0, h^\dagger\right)$, the following estimate holds
    \begin{align*}
        & \sqrt{\vertiii{\overline{u} - \overline{u}_h}^2_\ast + \vertiii{\overline{p} - \overline{p}_h}^2} \le \\
        & \q \le C 
    \begin{Bmatrix}
        h\sqrt{\abs{\log h}}\\
        h^{\frac{2}{3}}
    \end{Bmatrix}\left(\norm{\overline{u}}_{\Hs^s\left(Q_1\cup Q_2\right)} + \norm{\overline{p}}_{\Hs^s\left(Q_1\cup Q_2\right)}\right)\qqq \text{for }
    \begin{Bmatrix}
        m=1\\
        m=2
    \end{Bmatrix}.
    \end{align*}
    On the other hand, if $s\in \left(\frac{m+3}{2}, \infty\right)$, then we have the inequality
    $$
    \sqrt{\vertiii{\overline{u} - \overline{u}_h}^2_\ast + \vertiii{\overline{p} - \overline{p}_h}^2} \le Ch \left(\norm{\overline{u}}_{\Hs^s\left(Q_1\cup Q_2\right)} + \norm{\overline{p}}_{\Hs^s\left(Q_1\cup Q_2\right)}\right).
    $$
\end{corollary}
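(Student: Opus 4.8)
The plan is to interpose the operator $\pi_s$ between each exact solution and its discrete approximation, and then split the error through the triangle inequality for the combined seminorm $w \mapsto \smash{\sqrt{\vertiii{\cdot}_\ast^2 + \vertiii{\cdot}^2}}$. Concretely, I would first write
$$
\vertiii{\overline{u} - \overline{u}_h}_\ast \le \vertiii{\overline{u} - \pi_s \overline{u}}_\ast + \vertiii{\pi_s \overline{u} - \overline{u}_h}_\ast, \qquad \vertiii{\overline{p} - \overline{p}_h} \le \vertiii{\overline{p} - \pi_s \overline{p}} + \vertiii{\pi_s \overline{p} - \overline{p}_h}.
$$
The second summand in each chain is precisely the quantity bounded in \Cref{theo: error estimate}, so the task reduces to controlling the two \emph{consistency errors} $\vertiii{\overline{u} - \pi_s \overline{u}}_\ast$ and $\vertiii{\overline{p} - \pi_s \overline{p}}$. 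Along the way I would also note that $\vertiii{\cdot}_\ast$ genuinely satisfies the triangle inequality, since $w \mapsto z_h(\partial_t w)$ is linear and $\vertiii{\cdot}$ is a seminorm, so the sum-of-squares construction is itself a seminorm.

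The decisive observation is that $\pi_s w - w$ vanishes everywhere in $Q_T$ except on the mismatch region $S_h$, so every integral defining the seminorms localizes there. For the spatial part I would use that $S_h \subseteq Q_{1,h}\cup Q_{2,h}$ (up to the measure-zero discrete interface) together with $\kappa_h = \kappa_i$ on $Q_{i,h}$ to obtain
$$
\vertiii{\pi_s \overline{p} - \overline{p}}^2 \le \max\{\kappa_1,\kappa_2\}\,\norm{\nabla(\pi_s \overline{p} - \overline{p})}^2_{\LLs^2(S_h)} \le C\,\norm{\Ds(\pi_s \overline{p} - \overline{p})}^2_{\LLs^2(S_h)},
$$
and then invoke the third estimate of \Cref{lem: properties of pi} (valid for the $\Cs^2$-regular $\Gamma^\ast$) to bound this by $Ch^2\norm{\overline{p}}^2_{\Hs^s(Q_1\cup Q_2)}$. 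For the temporal part hidden inside $\vertiii{\cdot}_\ast$, I would exploit the stability of the discrete Riesz map: testing \eqref{eq: discrete riesz representation} with $\zeta_h = z_h(\partial_t w)$ and applying the Poincaré–Steklov inequality \eqref{eq: compare L2 and the W norm} gives $\vertiii{z_h(\partial_t w)} \le C\norm{\partial_t w}_{\Ls^2(Q_{1,h}\cup Q_{2,h})}$; choosing $w = \overline{u} - \pi_s \overline{u}$ and localizing to $S_h$ yields $\vertiii{z_h(\partial_t(\overline{u} - \pi_s \overline{u}))} \le C\norm{\Ds(\pi_s \overline{u} - \overline{u})}_{\LLs^2(S_h)} \le Ch\norm{\overline{u}}_{\Hs^s(Q_1\cup Q_2)}$. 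Consequently both consistency errors are of order $h$.

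Finally I would assemble the pieces via Minkowski's inequality in $\mathbb{R}^2$,
$$
\sqrt{\vertiii{\overline{u} - \overline{u}_h}^2_\ast + \vertiii{\overline{p} - \overline{p}_h}^2} \le \sqrt{\vertiii{\overline{u} - \pi_s \overline{u}}^2_\ast + \vertiii{\overline{p} - \pi_s \overline{p}}^2} + \sqrt{\vertiii{\pi_s \overline{u} - \overline{u}_h}^2_\ast + \vertiii{\pi_s \overline{p} - \overline{p}_h}^2},
$$
bounding the first term by the $O(h)$ consistency estimates and the second by \Cref{theo: error estimate}. Since the $O(h)$ contribution is dominated by the rates $h\sqrt{\abs{\log h}}$ (resp.\ $h^{2/3}$) in the regime $s\in[2,\frac{m+3}{2}]$ and matches the rate $h$ in the regime $s\in(\frac{m+3}{2},\infty)$, absorbing it into the generic constant $C$ reproduces exactly the asserted bounds. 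I do not anticipate a genuine obstacle: the only point demanding care is the bookkeeping that $\pi_s w - w$ is supported in $S_h$, so that the consistency seminorms are entirely governed by the $S_h$-estimates of \Cref{lem: properties of pi}, and the verification that even in the high-regularity case the $O(h)$ consistency term never degrades the dominant rate.
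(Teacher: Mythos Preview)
Your proposal is correct and matches the paper's approach exactly: the paper states the corollary as a direct consequence of combining \Cref{lem: properties of pi} with \Cref{theo: error estimate}, and your argument---splitting via $\pi_s$, localizing the consistency terms to $S_h$, invoking the third estimate of \Cref{lem: properties of pi}, and absorbing the resulting $O(h)$ contribution into the dominant rate from \Cref{theo: error estimate}---is precisely that combination spelled out in detail. The only points you add beyond the paper's one-line justification are the explicit stability bound $\vertiii{z_h(\partial_t w)} \le C\norm{\partial_t w}_{\Ls^2(Q_{1,h}\cup Q_{2,h})}$ for the temporal part of $\vertiii{\cdot}_\ast$ and the observation that $O(h)$ is dominated by both $h\sqrt{\abs{\log h}}$ and $h^{2/3}$, which are routine and correctly handled.
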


\section{Numerical results} 
\label{sec: numerical results} 

In this section, we present several numerical examples to validate our theoretical results. All computations are performed using FreeFEM++ \cite{Hecht2012}. For the implementation, the trial and test spaces in Problem \eqref{eq: discrete coupled state-adjoint} are both chosen as $\Us_h\times \Us_h$. Consequently, both the state and adjoint are approximated by continuous, space-time piecewise linear finite elements with zero initial values. 

\subsection{Example 1} 

We first start with an example in one spatial dimension. Consider the space-time cylinder $Q_T = \left(0,1\right)^2$ comprising two subdomains
$$
Q_1 = \left\{\left(x,t\right)\in Q_T\mid 0.4 + \mathrm{s}\left(t\right) < x < 0.6 + \mathrm{s}\left(t\right), \, 0<t<1\right\}, \qq Q_2 = Q_T\setminus \overline{Q_1},
$$
where $\mathrm{s}\left(t\right)$ is defined as
$$
\mathrm{s}\left(t\right) := \int\limits_0^t \mathrm{v}\left(\tau\right) \di \tau,
$$
and $\mathrm{v}$ is the velocity of the interface $\Gamma\left(t\right)$. The diffusion coefficient $\kappa$ is taken as $\left(\kappa_1, \kappa_2\right) =  \left(0.5,1\right)$. We set $\eta = 10^{-6}$ and define
$$
w_1\left(x,t\right) := \sin\paren{20\pi \paren{x - \mathrm{s}\left(t\right)} - \dfrac{47\pi}{6}} + \sin \paren{10\pi \mathrm{s}\left(t\right) + \dfrac{23\pi}{6}},
$$
and
$$
w_2\left(x,t\right) := \sin\paren{10\pi \paren{x- \mathrm{s}\left(t\right)} - \dfrac{23\pi}{6}} + \sin \paren{10\pi \mathrm{s}\left(t\right) + \dfrac{23\pi}{6}}.
$$
The optimal state $u^\ast\in \Us_0$ and adjoint $p^\ast\in \Us_T$ are then prescribed as
$$
u^\ast\left(x,t\right) = \begin{cases}
    w_1\left(x,t\right) \sin \paren{\dfrac{\pi t}{2}} & \text{for }\left(x, t\right) \in Q_1, \\[10pt]
    w_2\left(x,t\right) \sin \paren{\dfrac{\pi t}{2}} & \text{for } \left(x, t\right) \in Q_2,
\end{cases}
$$
and
$$
p^\ast\left(x, t\right) = \begin{cases}
    -\eta w_1\left(x,t\right) \sin \paren{\dfrac{\pi -\pi t}{2}} & \text{for } \left(x, t\right) \in Q_1, \\[10pt]
    -\eta w_2\left(x,t\right) \sin \paren{\dfrac{\pi -\pi t}{2}} & \text{for } \left(x, t\right) \in Q_2,
\end{cases}
$$
which exhibit the regularity $u^\ast, p^\ast\in \Hs^1\left(Q_T\right)\cap \Hs^3\left(Q_1\cup Q_2\right)$. The desired state $u_d$ is determined accordingly. Finally, we use the following error metric
$$
\mathcal{E}:= \left(\,\,\int\limits_{Q_T}\abs{\nabla\left(u^\ast - \overline{u}_h\right)}^2 + \abs{\nabla\left(p^\ast - \overline{p}_h\right)}^2\dx\dt\right)^{1/2},
$$
where $\left(\overline{u}_h, \overline{p}_h\right)\in \Us_h\times \Us_h$ is the solution to Problem \eqref{eq: discrete coupled state-adjoint}. It is evident that $\mathcal{E}$ is simpler to implement while maintaining the same convergence order as the estimates in \Cref{theo: error estimate,cor: usual norm error estimate}.

\begin{figure}
\centering
    \subfigure{\begin{tikzpicture}
        \draw[-latex,line width=0.2mm] (0,0)--(5,0) node[below=1mm]{$x$}; 
        \draw[-latex,line width=0.2mm] (0,0)--(0,4.7) node[left]{$t$};
        \node at (0, 0) [below=0.8mm]{$0$};
        \node at (4, 0) [below=0.6mm]{$1$};
        \node at (0,4) [left]{$1$};
        \draw[line width = 0.4mm,black] (0, 0) -- (4, 0) -- (4, 4) -- (0, 4) -- (0, 0);
        \draw[domain=0:4,smooth,variable=\y,line width = 0.3mm,black] plot ({1.6 + 0*\y}, {\y});
        \draw[domain=0:4,smooth,variable=\y,line width = 0.3mm,black] plot ({2.4 + 0*\y}, {\y});
        \node at (2, 2) {$Q_1$};
        \node at (1, 1) {$Q_2$};
        \node at (4.5, 2) {$Q_T$};
        \node at (2.8, 3) {$\Gm^{\ast}$};
        \node at (1.3, 3) {$\Gm^{\ast}$};
    \end{tikzpicture}}
    \subfigure{\begin{tikzpicture}
        \draw[-latex,line width=0.2mm] (0,0)--(5,0) node[below=1mm]{$x$}; 
        \draw[-latex,line width=0.2mm] (0,0)--(0,4.7) node[left]{$t$};
        \node at (0, 0) [below=0.8mm]{$0$};
        \node at (4, 0) [below=0.6mm]{$1$};
        \node at (0,4) [left]{$1$};
        \draw[line width = 0.4mm,black] (0, 0) -- (4, 0) -- (4, 4) -- (0, 4) -- (0, 0);
        \draw[domain=0:4,smooth,variable=\y,line width = 0.3mm,black] plot ({1.8 - 0.2*cos(deg(2*pi*\y/4))}, {\y});
        \draw[domain=0:4,smooth,variable=\y,line width = 0.3mm,black] plot ({2.6 - 0.2*cos(deg(2*pi*\y/4))}, {\y});
        \node at (2.4, 2) {$Q_1$};
        \node at (1, 1) {$Q_2$};
        \node at (4.5, 2) {$Q_T$};
        \node at (3, 3) {$\Gm^{\ast}$};
        \node at (1.45, 3) {$\Gm^{\ast}$};
    \end{tikzpicture}}
    \caption{The space-time cylinder $Q_T$ in Example 1, considering two different cases: $\mathrm{v} = 0$ (left) and $\mathrm{v} =0.1 \pi \sin\left(2 \pi t\right)$ (right).}
    \label{fig:domain_2D}
\end{figure}
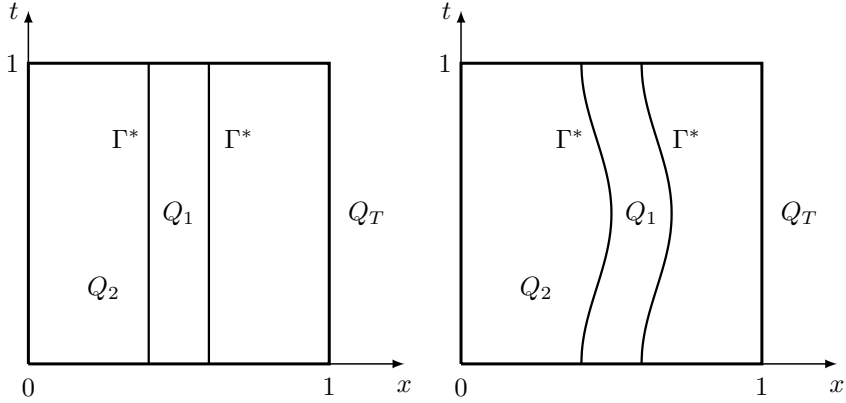

We begin by examining the case where $\mathrm{v} = 0$, corresponding to a non-moving interface problem. The space-time cylinder $Q_T$ is illustrated in \Cref{fig:domain_2D} (left). The error and experimental order of convergence are provided in \Cref{tab:example1a}, exhibiting a linear convergence of $\mathcal{E}$. This outcome is consistent with the results established in \Cref{theo: error estimate,cor: usual norm error estimate}, thereby validating the convergence behavior of our method for optimal control problems with a stationary interface.

Subsequently, we consider a time-dependent velocity $\mathrm{v} = 0.1 \pi \sin\left(2 \pi t\right)$, which results in a curved space-time interface, as depicted in \Cref{fig:domain_2D} (right). The error values and corresponding convergence rates for various mesh sizes are presented in \Cref{tab:example1b}. As the mesh is refined, we observe that $\mathcal{E}$ decreases at a rate of $\mathcal{O}\left(h\right)$. Clearly, the convergence order in both cases remains optimal regardless of the velocity selection, aligning with the established theoretical results.

\begin{table}[!htp]
    \centering
    \caption{Errors for various levels of mesh refinement in Example 1 when $\mathrm{v} = 0$.} 
    \label{tab:example1a}
    \begin{tabular}{ r c c c } 
        \toprule
        $\#$Dofs & Mesh size $h$  & Error $\mathcal{E}$& Order \\ \hline \hline
        3,394 &  $3.358 \times 10^{-2}$ &$4.732$ & --   \\ 
        13,963 & $1.684 \times 10^{-2}$ &  $2.278$ &1.054 \\ 
        53,114 &  $9.829 \times 10^{-3}$ & $1.152$& 0.968   \\
        120,956 & $6.112 \times 10^{-3}$ & $0.757$&1.068\\
        222,642 & $4.246 \times 10^{-3}$  & $0.557$& 1.194\\
        482,922 & $3.109 \times 10^{-3}$ & $0.373$ & 1.118\\
        \bottomrule
    \end{tabular}
\end{table}

\begin{table}[!htp]
    \centering
    \caption{Errors for various levels of mesh refinement in Example 1 when $\mathrm{v} = 0.1 \pi \sin\left(2 \pi t\right)$.} \label{tab:example1b}
    \begin{tabular}{ r c c c } 
        \toprule
        $\#$Dofs & Mesh size $h$  & Error $\mathcal{E}$& Order \\ \hline \hline
        3,255 &  $3.265 \times 10^{-2}$ &$4.947$ & -- \\ 
        13,257 & $1.633 \times 10^{-2}$ & $2.362$ & $1.065$\\ 
        52,011 &  $9.027 \times 10^{-3}$ & $1.188$  & $1.049$   \\
        81,095 & $7.123 \times 10^{-3}$  & $0.950$&$1.003$\\
        155,221 & $5.874 \times 10^{-3}$  & $0.685$ &$1.007$\\
        257,233 & $4.109 \times 10^{-3}$  & $0.530$& $1.021$\\
        \bottomrule
    \end{tabular}
\end{table}

\subsection{Example 2}

In this example, we consider Problem \eqref{eq: problem formulation} in the setting where $\Omega = \left\{\xb = \left(x, y\right)^\transpose\in \mathbb{R}^2\mid x^2 + y^2 \le \frac{1}{4}\right\}$ and $T=1$. The interface $\Gamma\left(t\right)$ is assumed to rotate about the $t$-axis throughout the time interval, with a velocity field given by $\vb = \left(-2 \pi y, 2 \pi x\right)^\transpose$. The discretized space-time cylinder $Q_T$ is depicted in \Cref{fig:model_3D}. 

\begin{figure}[!htp]
    \subfigure{\begin{tikzpicture}
        \def\R{0.5}
        \def\Ri{1.95}
        \def\anga{60}
        \def\angaa{35}
        \coordinate (O) at (0,0);
        \coordinate (O1) at (1,0);
        \coordinate (R1) at (\anga:\Ri);
        \coordinate (X1) at (\angaa:{\Ri/cos(\anga-\angaa)});
        \coordinate (O2) at (-0.87,-0.5);
        \draw[fill = gray!15] (0,0) circle (1.95cm);
        \definecolor{myblue}{rgb}{0.15, 0.45, 0.9}
        \draw[fill = myblue, opacity=1.0, line width = 0.2mm] (O1) circle (\R);
        \draw[fill = black] (1,0) circle (0.02cm);
        \draw[dashed] (O1) -- +(\angaa:\R);
        \draw[->, line width = 0.25mm] (O1) ++(\angaa:\R) -- ++(\angaa + 90: 0.7) node[midway,right, yshift = 5pt] {$\vb$};
        \draw[fill = myblue, opacity=1.0] (O2) circle(\R);
        \draw[->, line width = 0.25mm] (0.5,0.7) arc(45:175:0.7) node[midway,above] {$\mathrm{\textbf{s}}\left(t\right)$};
        \node at (O2) {$\Om_1(t)$};
        \node at (0.3,-1.35) {$\Om_2(t)$};
        \node at (2,-1.5) {$\Om$};
    \end{tikzpicture}} 
    \subfigure{\includegraphics[scale = .2]{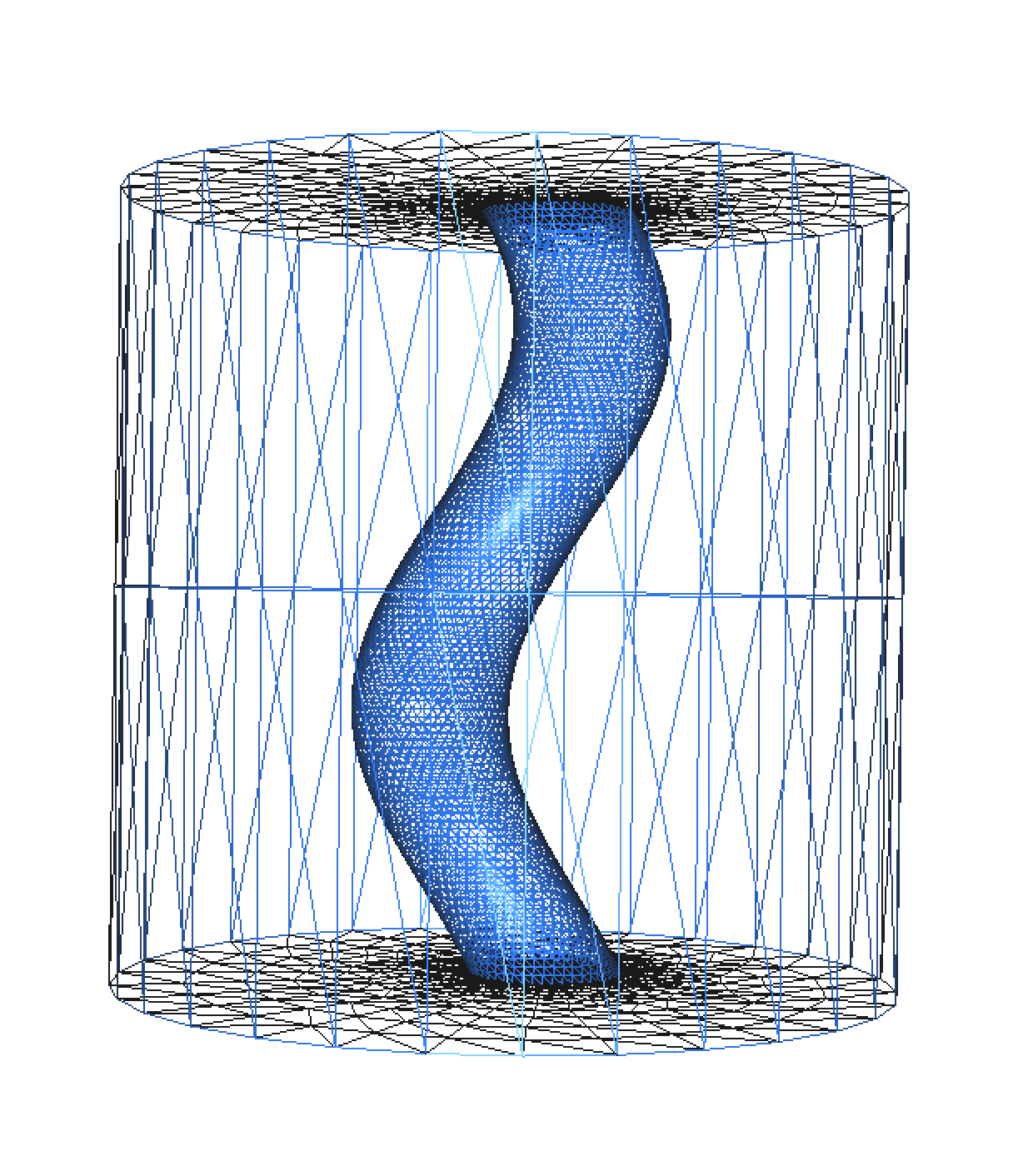}}
    \subfigure{\includegraphics[scale = .19]{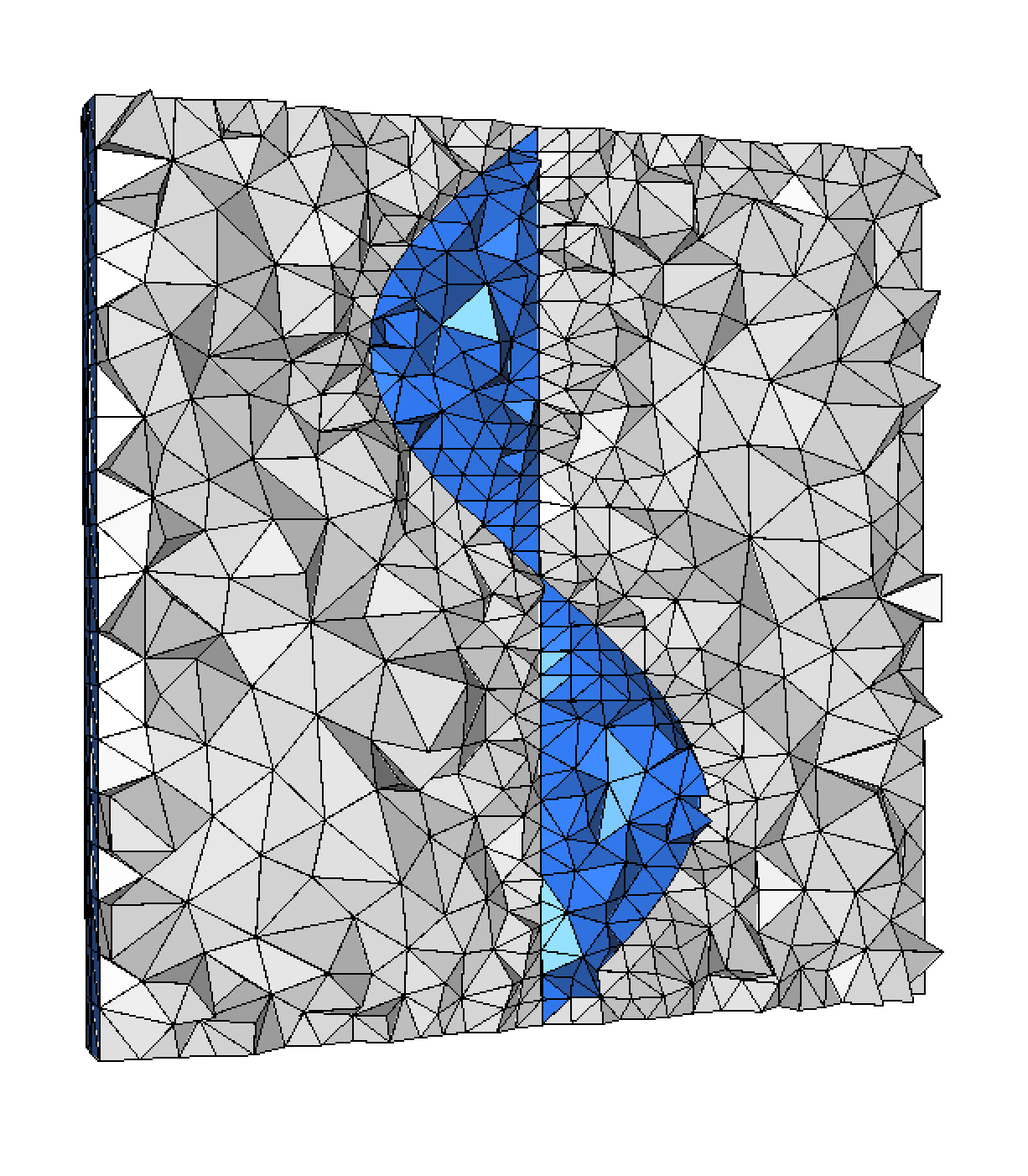}}
    \caption{The domain $\Omega$ in Example 2 (left) with its corresponding discretized space-time cylinder $Q_T$ (middle) and the cross-section of $Q_T$ along the $t$-axis (right).}
    \label{fig:model_3D}
\end{figure}

We set the diffusion coefficient $\kappa$ as $\left(\kappa_1, \kappa_2\right) = \left(2,1\right)$ and consider the desired state 
$$
u_d \left(\xb, t\right) = \left(x^2 + y^2 - 1\right) e^{2t} \qqqq \text{for } \left(\xb, t\right)\in Q_T,
$$
which is a smooth function within the entire cylinder $Q_T$. Our goal is to best approximate $u_d$ given the parameter $\eta = 10^{-2}$. Since the exact state and adjoint are unknown, we first compute a reference solution $\left(u_r, p_r\right)\in \Us_h\times \Us_h$ to Problem \eqref{eq: discrete coupled state-adjoint} on a highly refined mesh of $Q_T$, consisting of $404,958$ tetrahedra. The error is then assessed using the following metric
$$
\mathcal{E}_r := \left(\ \int\limits_{Q_T}\abs{\nabla\left(u_r - \overline{u}_h\right)}^2 + \abs{\nabla\left(p_r - \overline{p}_h\right)}^2\dx\dt\right)^{1/2}.
$$
The resulting error $\mathcal{E}_r$, along with its estimated convergence order, is presented in \Cref{tab:example2a}. Once again, we observe linear convergence, which confirms the error bounds in \Cref{cor: usual norm error estimate}.

\begin{table}[!htp]
    \centering
    \caption{Errors for various levels of mesh refinement in Example 2 with a smooth desired state.} 
    \label{tab:example2a}
    \begin{tabular}{ r c r c } 
        \toprule
        $\#$Dofs & Mesh size $h$  & Error $\mathcal{E}_r$& Order \\ \hline \hline
        450 &  $1.204 \times 10^{-1}$&$16.919$& -- \\ 
        1,928 &  $7.966 \times 10^{-2}$  &  $11.268$&1.002  \\ 
        4,623 & $6.034 \times 10^{-2}$  & $8.441$& 1.004 \\ 
        8,540 &  $4.917 \times 10^{-2}$ & $6.610$& 1.095  \\
        13,681& $4.167 \times 10^{-2}$  &$5.595$&0.914\\
        20,202 & $3.616 \times 10^{-2}$  & $4.703$&1.127\\
        \bottomrule
    \end{tabular}
\end{table}

Finally, we revisit this problem setting, but now with a discontinuous desired state
$$
u_d\left(\xb,t\right) = \begin{cases}
    2 & \text{if} \q x^2 + y^2 + \left(t-\dfrac{1}{2}\right)^2 \leq \dfrac{1}{5},\\
    0 & \text{else}.
\end{cases}
$$
Clearly, $u_d\notin \Hs^1\left(Q_T\right)$. Moreover, unlike $u^\ast$ and $p^\ast$, the discontinuity of $u_d$ occurs across the surface $x^2 + y^2 + \left(t-\frac{1}{2}\right)^2 = \frac{1}{5}$, rather than along the interface $\Gamma^\ast$. The discrete state $u_h$ with $h=6.674 \times 10^{-2}$ is shown in \Cref{fig:ex2}. As illustrated, $u_h$ approximates $u_d$ quite well, demonstrating the effectiveness of our scheme in discretizing Problem \eqref{eq: problem formulation}. Furthermore, we evaluate the convergence rate of $\mathcal{E}_r$ and summarize the results in \Cref{tab:example2b}. The observed convergence rate still aligns with the estimates presented in \Cref{theo: error estimate,cor: usual norm error estimate}. However, in this case, the convergence order is suboptimal. This behavior can be attributed to the nonsmooth nature of $u_d$, which probably limits the regularity of both $u^\ast$ and $p^\ast$ within the union of subdomains $Q_1\cup Q_2$. 

\begin{figure}[!tp]
    \centering
    \subfigure{\includegraphics[scale = .188]{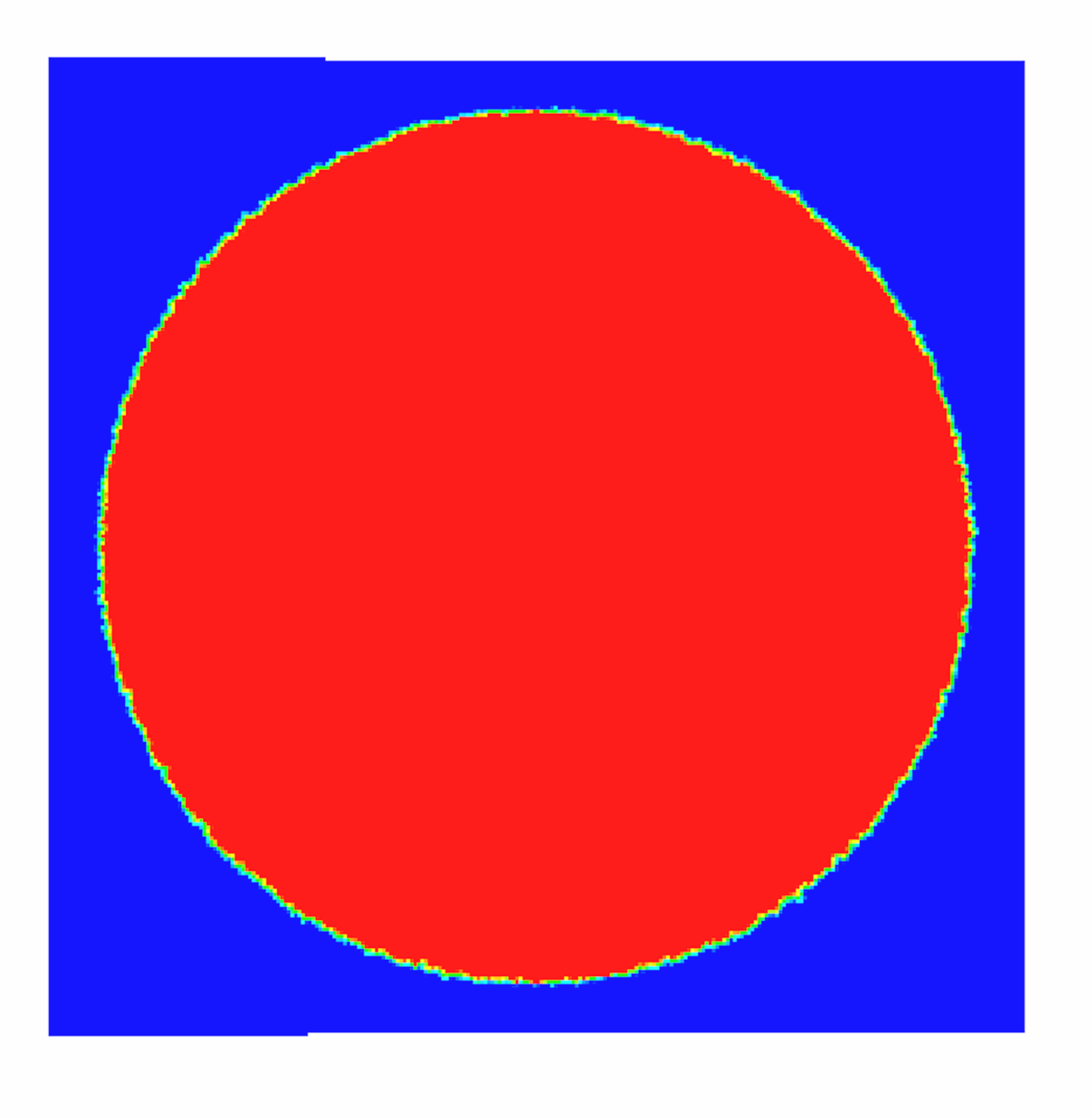}} \quad
    \subfigure{\includegraphics[scale = .25]{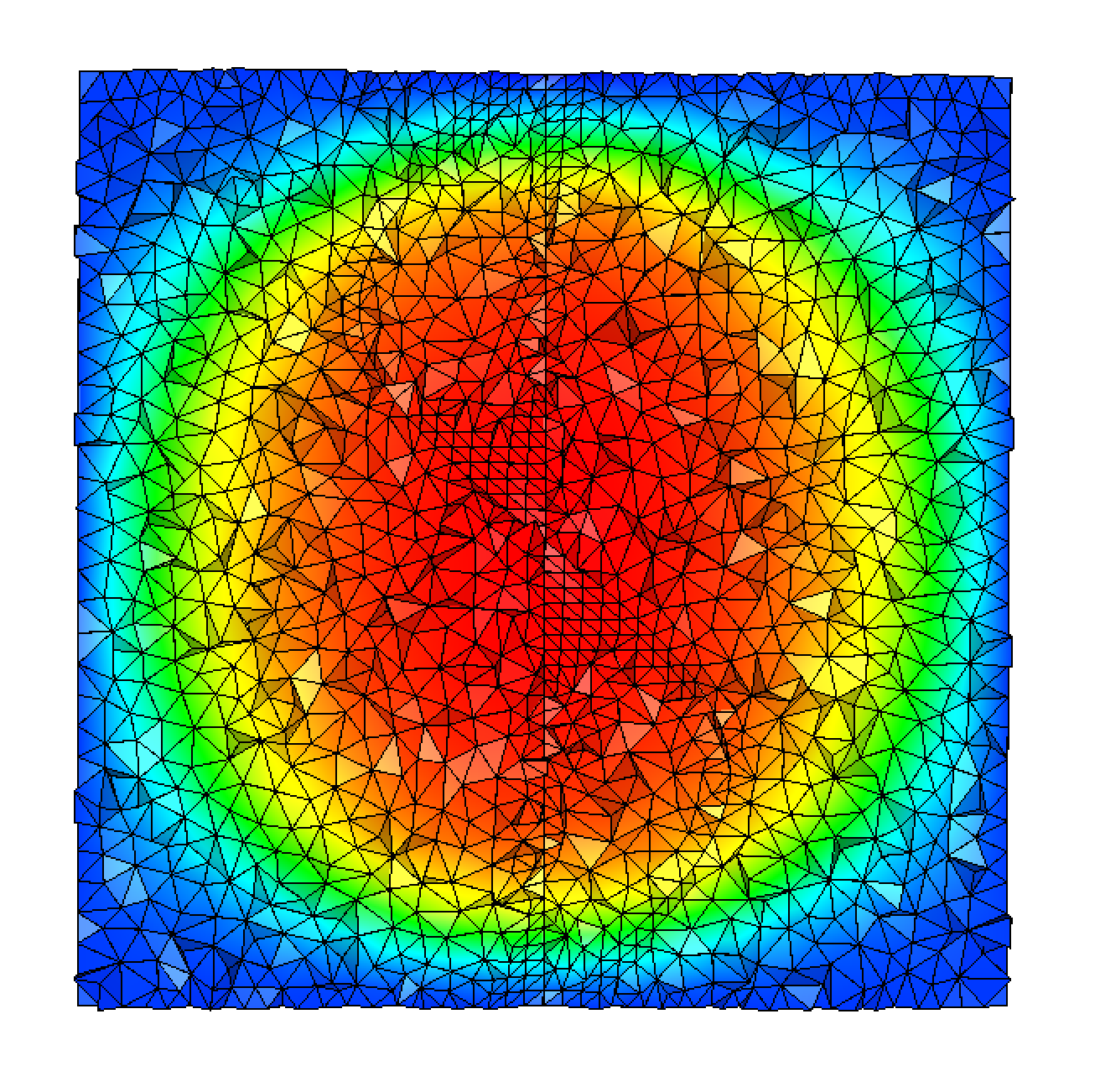}}
    \caption{The cross-section along the $t$-axis of the desired state $u_d$ (left) and the discrete state $u_h$ in Example 2 with $h=6.674 \times 10^{-2}$ (right).}
    \label{fig:ex2}
\end{figure}

\begin{table}[!htp]
    \centering
    \caption{Errors for various levels of mesh refinement in Example 2 with a discontinuous desired state.} 
    \label{tab:example2b}
    \begin{tabular}{ r c c c } 
        \toprule
        $\#$Dofs & Mesh size $h$ & Error $\mathcal{E}_r$& Order \\ \hline \hline
        450 &  $1.204 \times 10^{-1}$&$3.931$& -- \\ 
        1,928 &  $7.966 \times 10^{-2}$  &  $2.797$&0.838  \\ 
        4,623 & $6.034 \times 10^{-2}$  & $2.156$& 0.905 \\ 
        8,540 &  $4.917 \times 10^{-2}$ & $1.673$& 1.134  \\
        13,681& $4.167 \times 10^{-2}$ &$1.467 $&0.720\\
        20,202 & $3.616 \times 10^{-2}$  & $1.312 $&0.726\\
        \bottomrule
    \end{tabular}
\end{table}


\bibliographystyle{siamplain}
\bibliography{SIAM_ref}
\end{document}